\DeclareMathOperator{\height}{height}
\DeclareMathOperator{\Hom}{Hom}
\newtheorem{theorem}{Theorem}
\newtheorem{prop}[theorem]{Proposition}
\newtheorem{lemma}[theorem]{Lemma}
\newtheorem{corollary}[theorem]{Corollary}
\newtheorem{conj}[theorem]{Conjecture}
\newtheorem{example}[theorem]{Example}
\newtheorem{remark}[theorem]{Remark}
\newtheorem{definition}[theorem]{Definition}
\numberwithin{theorem}{section}
\begin{document}

\title{\bf Convex Polytopes for the Central Degeneration of the Affine Grassmannian}

\author{Qiao Zhou}

\date{}

\maketitle



\begin{abstract}

We study the algebraic geometry and combinatorics of the central degeneration (the degeneration that shows up in  local models of Shimura varieties and Gaitsgory's central sheaves) in type A. More specifically, we elucidate the central degeneration of semi-infinite orbits and explain its relations with Levi restriction. Also, we discuss the central degeneration of Mirkovi$\acute{\text{c}}$-Vilonen cycles in the affine Grassmannian, and the corresponding transformations of Mirkovi$\acute{\text{c}}$-Vilonen polytopes. In addition, we shed some light on the geometry of Iwahori MV cycles in the affine Grassmannian and generalized MV cycles in the affine flag variety, which are closely related to Demazure modules and affine Deligne-Lusztig varieties respectively.

\end{abstract}

\tableofcontents

\section{Introduction}

In mathematics, there is an interesting theme of explaining abstract concepts in explicit, concrete and visual ways. This paper is an example of that. In particular, we use toric geometry and some linear algebraic models to study an important construction in geometric representation theory.  

The Geometric Satake Theorem \cite{Lu2, Gin, BD, MV} is a corner stone of the famous Geometric Langlands program. It states that the category of finite dimensional highest weight representations of a reductive group $G^{\vee}$ is equivalent to the category of $G^{\vee}(\mathcal{O})-$equivariant perverse sheaves on the affine Grassmannian for its Langlands dual group $G$. In \cite{MV}, Mirkovi$\acute{\text{c}}$ and Vilonen introduced a collection of algebraic cycles in the affine Grassmannian for $G$, called MV cycles, which corresponds to a basis for the finite-dimensional representations of $G^{\vee}$. Subsequently Anderson initiated the combinatorial study of MV cycles via their torus equivariant moment polytopes, called MV polytopes. Then Kamnitzer \cite{Kam1} gave a complete combinatorial characterization of MV polytopes, and further elucidated the connections between MV polytopes and the canonical bases of representations. 

On the other hand, in \cite{Gai}, Gaitsgory gave a geometric construction of a map from the spherical Hecke algebra to the center of the Iwahori Hecke algebra via a nearby cycles functor from the category of $G(\mathcal{O})-$equivariant perverse sheaves on the affine Grassmannian to the category of Iwahori equivariant perverse sheaves on the affine flag variety. His construction is closely related to local models of Shimura varieties in number theory, as explained in \cite{Ha, HaNg}. The image of this nearby cycles functor is usually called central sheaves. We define the central degeneration to be the $T-$equivariant flat degeneration from the affine Grassmannian to the affine flag variety in the global affine flag variety constructed by Gaitsgory. 

In this paper we study the explicit algebraic geometry and combinatorics of this degeneration, and connect that back to the Geometric Satake correspondence. More specifically, we elucidate the central degeneration of semi-infinite orbits and explain its relations with Levi restriction. Also, we discuss the central degeneration of Mirkovi$\acute{\text{c}}$-Vilonen cycles in the affine Grassmannian, and the corresponding transformations of Mirkovi$\acute{\text{c}}$-Vilonen polytopes. In addition, we shed some light on the geometry of Iwahori MV cycles in the affine Grassmannian and generalized MV cycles in the affine flag variety, which are closely related to Demazure modules and affine Deligne-Lusztig varieties respectively.

\subsection{Main results and future projects}

Consider the global affine flag variety $Fl_{\mathbb{A}^1}$ \cite{Gai} over $\mathbb{A}^1$. Each general fiber over 
$\mathbb{A}^1 \backslash \{ 0 \}$ is isomorphic to the direct product $Gr \times G/B$; its special fiber over $\{ 0 \}$ is isomorphic to the affine flag variety $Fl$.  Let $S$ be a $T-$invariant subscheme of $Gr \times \{ e\}$ in the general fiber, like a $G(\mathcal{O})$ orbit, an MV cycle, an Iwahori orbit, a semi-infinite orbit, an orbit of $T(\mathcal{O})$, etc. We would like to understand the special fiber limit $\tilde{S}$ of $S$ in the affine flag variety.

Recall that in the case of the central degeneration of the closures of $G(\mathcal{O})$ orbits, the special fiber limits are finite unions of Iwahori orbits \cite{Zhu}. In the case of the closures of semi-infinite orbits, we have the following theorem: 

\begin{theorem}

Let $G = GL_n(\mathbb{C})$ or any matrix Lie group over $\mathbb{C}$. Given the closure of any $N_w(\mathcal{K})$ orbit $\overline{S^{\mu}_w}, \mu \in X_*(T)$ in the affine Grassmannian, its special fiber limit is the closure of the corresponding $N_w(\mathcal{K})$ orbit $\overline{S_w^{(\mu, e)}}, (\mu, e) \in W_{aff}$ in the affine flag variety.

\end{theorem}

More generally, let $r_P$ denote the restriction map for a Levi factor $G_J$ of a parabolic subgroup $P^{J}$. Then we show that central degeneration commutes with Levi restriction \cite{BD}. 

\begin{theorem}

Let $S_w^{\mu}, w \in W$ be a semi-infinite orbit in the affine Grassmannian of type A. Let $P^J \supseteq B_w$ be a parabolic subgroup with Levi factor $G_J$. For any $u$ in the Weyl group $W_J$ of $G_J$, the following digram commutes. In other words, central degeneration of the closures of semi-infinite orbits commutes with Levi restriction/parabolic retraction. 

\xymatrixrowsep{10mm} 
\xymatrixcolsep{2pc}
\xymatrix{
\overline{S_{wu}^{\mu}} \subset Gr_G \ar[rr]^{deg} \ar[d]^{r_P^{\mu, u}} & & \overline{S_{wu}^{(\mu, e)}} \subset Fl_G \ar[d]^{r_P^{(\mu, e), u}} \\
 \overline{S_{u, J}^{\mu}} \subset Gr_{G_J} \ar[rr]^{deg} & & \overline{S_{u, J}^{(\mu, e)}} \subset Fl_{G_J}
}

\end{theorem}

Next we discuss the central degeneration of MV cycles \cite{MV} and the related transformations of moment polytopes by presenting two theorems, one for $SL_2(\mathbb{C})$, and the other for general $SL_n(\mathbb{C})$. 

\begin{theorem}

Let $G = SL_2(\mathbb{C})$. Let $S = \overline{Gr^{\lambda} \cap S_{w_0}^{\mu}} = \overline{S_{e}^{- \lambda} \cap S_{w_0}^{\mu}}$, where $\lambda \in X_*(T)^+, \mu \in X_*(T)$, be an MV cycle, and $\tilde{S}$ be its special fiber limit in the affine flag variety. If $\mu = -\lambda$, then $\tilde{S}$ is the $T-$fixed point indexed by $(-\lambda, e)$. Otherwise $\tilde{S}$, has exactly two irreducible components. Each irreducible component of $\tilde{S}$ is a generalized MV cycle and the closure of a GGMS stratum in the affine flag variety. 

More specifically, consider the case of $\mu \neq -\lambda$.  If $\lambda = \mu$, then $S$ is the closure of the $G(\mathcal{O})$ orbit $Gr^{\lambda}$. The two irreducible components of $\tilde{S}$ are the closures of two Iwahori orbits 
$\overline{I^{(\lambda, e)}}= \overline{I^{(\lambda, e)} \cap S_{w_0}^{(\lambda, e)}}$, and $\overline{I^{(-\lambda, e)}} = \overline{I^{(-\lambda, e)} \cap S_{w_0}^{(\lambda, w_0)}}$, both of which are special cases of generalized MV cycles. 
If $-\lambda < \mu < \lambda$, one is the closure of the intersection $I^{(-\lambda, e)} \cap S_{w_0}^{(\mu, w_0)}$,  and the other is the closure of the intersection $I^{(-\lambda + \alpha, w_0)} \cap S_{w_0}^{(\mu, e)}$. 

In both cases, the irreducible component containing $(\mu, e)$ is equal to the closure of the GGMS stratum $S_e^{(-\lambda + \alpha, w_0)} \cap S_{w_0}^{(\mu, e)}$, and the moment polytope of this component is the line interval with vertices $(-\lambda + \alpha, w_0)$ and $(\mu,e)$. The irreducible component containing $(\mu, w_0)$ is equal to the closure of the GGMS stratum $S_e^{(-\lambda, e)} \cap S_{w_0}^{(\mu, w_0)}$, and the moment polytope of this component is the line interval with vertices $(-\lambda, e)$ and $(\mu, w_0)$.

\end{theorem}

\begin{theorem}

Let $G = SL_n(\mathbb{C})$. Given an MV cycle $S = \overline{\cap_{w \in W}S_w^{\mu_w}}$ with special fiber limit $\tilde{S}$. Let $P$ denote the moment polytope of $S$ and $\tilde{S}$ with vertices $(\mu_w, e), w \in W$. Given any irreducible component $S'$ of $\tilde{S}$, its moment polytope $P'$ is a Pseudo-Weyl polytope in $P$. The following two properties must be satisfied by $P'$: (1) every pair of adjacent vertices of $P'$ can be connected by an extended torus invariant $\mathbb{P}^1$; (2) the root number of any vertex $v$ of $P'$, $n_{P'_v}$, is bigger than or equal to the dimension of $S$.  

In $\tilde{S}$, each extremal $T-$fixed point indexed by $(\mu_w, e)$ lies in a unique irreducible component $S'_w$. The moment polytope $P'_w$ of $S'_w$ satisfies the following additional property: (3) the set of adjacent vertices of $(\mu_w, e)$ in $P'_w$, denoted by $C_1$, is in bijection with the set of adjacent vertices of $(\mu_w, e)$ in $P$, denoted by $C_2$. Given a vertex $(\eta, e) \in W_{aff}$ in $C_2$, the corresponding element $f_{\eta}$ in $C_1$ is the affine Weyl group element adjacent to $(\eta, e)$ on the edge connecting $(\mu_w, e)$ and $(\eta, e)$ in $P$.

\end{theorem}

For readers who like examples and diagrams, we explicitly describe all the irreducible components of the special fiber limits of some MV cycles for $G = SL_3(\mathbb{C})$ by illustrating their moment polytopes contained in the original MV polytopes. For example, in the diagram below, the lattice formed by the hyperplanes is the coweight lattice of $G$, and the alcoves are labeled by affine Weyl group elements. Various convex polytopes lie in another lattice, and their vertices all lie in the interior of alcoves. The big trapezoid $P$ is the MV polytope of a given MV cycle $S$, and the five single-colored convex polytopes contained in $P$ are the moment polytopes of the five irreducible components in the special fiber limit of $S$.  Note that in many familiar examples, the moment polytope for each irreducible component contains a unique vertex of the original MV polytope. In this special example below, the black convex polytope does not contain any vertex of $P$, and was discovered by some methods from symplectic geometry. 

\includegraphics[scale = 0.6]{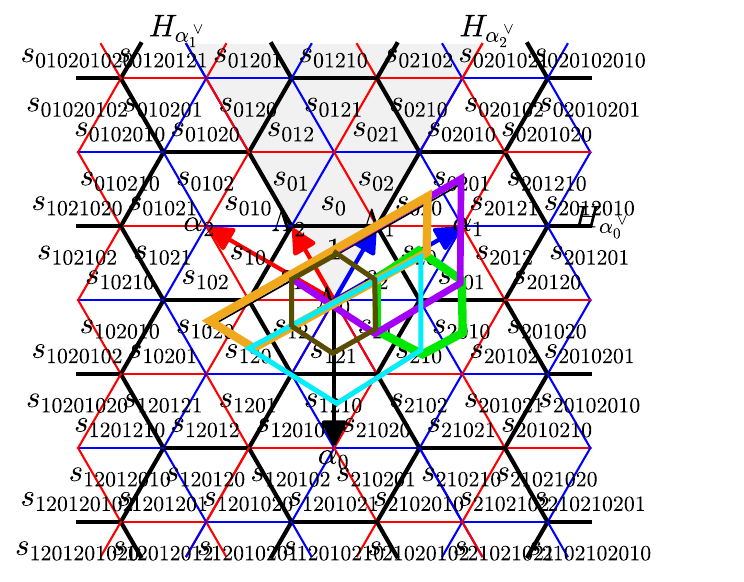}

Throughout this paper, we illustrate the close relations between the special fiber limits of MV cycles and GGMS strata, Iwahori MV cycles (which reflect the combinatorics of Demazure modules for the Langlands dual group) as well as generalized MV cycles (whose geometries are linked to that of affine Deligne-Lusztig varieties). We also prove some results about Iwahori MV cycles and generalized MV cycles by adopting the arguments used in \cite{MV}. 

\begin{theorem}

Let $G$ be a connected reductive algebraic group. Let $\lambda$ and $\mu$ be coweights of $G$, and let 
$\lambda_{dom}$ be the dominant coweight associated with $\lambda$. Let $\tilde{W} = W/W_J$ denote the quotient of the finite Weyl group associated with the partial flag variety $G/P_{\lambda_{dom}}$. Let $\lambda = w \cdot \lambda_{dom}$ for a unique $w \in \tilde{W}$. Let $X_w$ denote the Schubert variety for $w \in \tilde{W}$.

The intersection of the $U^-$ orbit $S_{w_0}^{\mu}$ with the Iwahori orbit $I^{\lambda}$ is equidimensional and of dimension

$$\height(\lambda_{dom} + \mu) - \dim(G/P_{\lambda_{dom}}) + \dim (X_w)$$ 
when $\lambda \leq \mu \leq \lambda_{dom}$, and is $\emptyset$ otherwise. 

\end{theorem}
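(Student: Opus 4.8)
The plan is to analyse the intersection through the fibration $ev_0 \colon Gr^{\lambda_{dom}} \to G/P_{\lambda_{dom}}$, using that $I^\lambda$ is exactly the preimage $ev_0^{-1}(X_w)$ of the $B$-Schubert cell $X_w$ and is a vector bundle over $X_w$ of rank $2\,\height(\lambda_{dom}) - \dim(G/P_{\lambda_{dom}})$, as recalled in the Iwahori-orbit discussion. Since $I^\lambda \subset Gr^{\lambda_{dom}}$, the intersection $I^\lambda \cap S_{w_0}^\mu$ is one stratum of the decomposition $Gr^{\lambda_{dom}} \cap S_{w_0}^\mu = \bigsqcup_{v \in \tilde W} (I^{v\cdot\lambda_{dom}} \cap S_{w_0}^\mu)$, and I would take the known Mirkovi\'c--Vilonen dimension $\dim(Gr^{\lambda_{dom}} \cap S_{w_0}^\mu) = \height(\lambda_{dom}+\mu)$ (which is equidimensional, and consistent with \Cref{MVthrm1}) as the global constraint. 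Reading the target formula as $\dim(I^\lambda \cap S_{w_0}^\mu) = \height(\lambda_{dom}+\mu) - \operatorname{codim}_{G/P_{\lambda_{dom}}}(X_w)$ makes the strategy transparent: the top stratum, where $\lambda$ is antidominant and $X_w$ is the open cell, must recover the full MV dimension, and passing to a cell of codimension $c$ should drop the dimension by exactly $c$.

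First I would settle nonemptiness. The inequality $\mu \le \lambda_{dom}$ is the Mirkovi\'c--Vilonen condition for $S_{w_0}^\mu$ to meet the open $G(\mathcal{O})$-orbit $Gr^{\lambda_{dom}}$, while $\lambda \le \mu$ is forced by the closure relations of semi-infinite orbits from the preceding section: the binding $T$-fixed point of $I^\lambda$ is $t^\lambda$, and one checks $I^\lambda \cap S_{w_0}^\mu \neq \emptyset$ exactly when $t^\lambda \in \overline{S_{w_0}^\mu}$, i.e. when $\lambda \le \mu$ in the coweight order (the lattice/alcove description for $U^-$ orbits). Combining these gives $\lambda \le \mu \le \lambda_{dom}$, and since this implies $w_0\lambda_{dom} \le \mu$, it is compatible with the nonemptiness of the ambient MV intersection.

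For the dimension I would use the moment-polytope technique of the \emph{Dimensions and Polytopes} section, which is tailored to $T$-invariant schemes of this kind. Since $I^\lambda \cap S_{w_0}^\mu$ is $T$-invariant, its moment polytope $P$ is the convex hull of its $T$-fixed points, and \Cref{polydimlemma} gives $\dim(I^\lambda \cap S_{w_0}^\mu) \le |(v+\Lambda) \cap P|$ at any vertex $v$. The plan is to identify $P$ as the slice of the MV polytope of $Gr^{\lambda_{dom}} \cap S_{w_0}^\mu$ cut out by the $B$-cell condition $ev_0^{-1}(X_w)$, compute $|(v+\Lambda)\cap P|$ at the extremal fixed point $t^\lambda$, and check it equals $\height(\lambda_{dom}+\mu) - \dim(G/P_{\lambda_{dom}}) + \dim(X_w)$; the count organizes as the root directions along the fibre of $ev_0$ plus the root directions along $X_w$, which is the source of the $\height$ term and of the $\dim X_w - \dim(G/P_{\lambda_{dom}})$ correction. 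To promote this bound to an equality and simultaneously obtain equidimensionality, I would exhibit at $t^\lambda$ an open dense neighbourhood that is a product of orbits of the relevant root subgroups, in the spirit of \Cref{firstrootthrm}, so that the lattice-point count is actually achieved; purity of the ambient MV intersection together with the affine-bundle structure over the irreducible cell $X_w$ would then force every component to have the common dimension.

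The hard part will be controlling the fibres of $ev_0$ over $X_w$ uniformly. The obstruction is that $S_{w_0}^\mu$ is stable only under $T$ and $N^-(\mathcal{O})$, not under $B = ev_0(I)$, so the intersection is \emph{not} a homogeneous fibre bundle over the $B$-cell $X_w$; rather $ev_0\bigl(I^\lambda \cap S_{w_0}^\mu\bigr)$ is $B^-$-stable and meets $X_w$ in a union of Richardson-type pieces $X_w \cap B^- v P/P$, over which the fibre dimension must be shown to be constant and equal to $\height(\lambda_{dom}+\mu) - \dim(G/P_{\lambda_{dom}})$. I expect this to be the crux: one must prove that the semi-infinite orbit meets each fibre of $ev_0$ in the expected codimension, with no excess intersection. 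I would attack it either via the Braverman--Gaitsgory restriction maps $q_J$ to the Levi of $P_{\lambda_{dom}}$, reducing the fibre computation to a smaller-rank MV statement exactly as in the inductive proof of \Cref{MVthrm1}, or, as an independent check in type A, via the positively-folded alcove-walk count of Parkinson--Ram--Schwer, which computes these Iwahori-by-$U^-$ intersection dimensions combinatorially and should reproduce the closed formula.
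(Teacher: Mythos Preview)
Your proposal takes a genuinely different route from the paper, and as written it has a real gap.

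The paper does not use moment polytopes here at all. Its argument has three steps. First, at the extreme $\mu = \lambda_{dom}$ it identifies $S_{w_0}^{\lambda_{dom}} \cap Gr^{\lambda_{dom}}$ with the $J^-$-orbit of $t^{\lambda_{dom}}$ (where $J^- = ev_0^{-1}(N^-)$), so that $S_{w_0}^{\lambda_{dom}} \cap I^\lambda$ is the preimage of the open Richardson cell $X_1^- \cap X_w$ and is therefore dense in $I^\lambda$, giving a single component of dimension $\dim I^\lambda$. Second, at the other extreme $\mu = \lambda$ it shows $S_{w_0}^{\lambda} \cap Gr^{\lambda_{dom}} = U^-_{\mathcal O} \cdot t^\lambda \cap Gr^{\lambda_{dom}}$, computes tangent spaces at $t^\lambda$ to see that $I^\lambda$ and $S_{w_0}^\lambda$ meet transversely inside $Gr^{\lambda_{dom}}$, and invokes Kleiman's transversality theorem to get equidimensionality and the dimension formula there. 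Third, for intermediate $\mu$ it uses that the boundary of each $\overline{S_{w_0}^{\mu}}$ is cut by a hyperplane section: as $\height(\mu)$ drops by one, the dimension of the intersection drops by at most one, and since the total drop between the two computed extremes is exactly $\height(\lambda_{dom} - \lambda)$, it must drop by exactly one at every step. This sandwich argument is what supplies both the dimension and the equidimensionality for all $\mu$ in the range.

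Your approach instead tries to read the dimension off a moment polytope via \Cref{polydimlemma}. But that lemma only gives an \emph{upper} bound, and the paper proves equality only for Iwahori orbits and MV cycles, where one knows in advance that a neighbourhood of each extremal fixed point is a product of root-subgroup orbits. For $I^\lambda \cap S_{w_0}^\mu$ this is precisely what you would have to establish, and your plan (``exhibit at $t^\lambda$ an open dense neighbourhood that is a product of orbits of the relevant root subgroups'') is the assertion, not an argument; indeed the paper poses the analogous polytope equality for $I \cap U^-$ intersections in the affine flag variety only as a conjecture. More seriously, your route to equidimensionality---``purity of the ambient MV intersection together with the affine-bundle structure over $X_w$''---does not go through: $S_{w_0}^\mu$ is not $I$-stable, so the intersection is not a bundle over $X_w$, as you yourself note, and equidimensionality of $Gr^{\lambda_{dom}} \cap S_{w_0}^\mu$ says nothing about the dimensions of its strata $I^{v\lambda_{dom}} \cap S_{w_0}^\mu$ individually. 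The missing idea is the hyperplane-section interpolation between the two extreme values of $\mu$, which bypasses the need for any uniform fibre control.
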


By the equidimensionality claim in our theorem above and Theorem 11.7 in \cite{Sch}, we conclude the following:

\begin{corollary}
	
The number of irreducible components in the intersection of an Iwahori orbit and a $U^-$ orbit in the affine Grassmannian $I^{\lambda} \cap S_{w_0}^{\mu}, \lambda, \mu \in X_*(T),$ is equal to the $\mu-$weight multiplicity in the Demazure module $B^{\lambda}$ for the Langlands dual group $G^{\vee}$. 

\end{corollary}

One result about generalized MV cycles also immediately follows. 

\begin{corollary}

Let $\gamma' = (w \cdot \lambda_{dom}, w'), \gamma'' = (\mu, w'') \in W_{aff}$, where $\lambda_{dom} \in X_*^+(T), \mu \in X_*(T)$, $\tilde{W}$ is the quotient of $W$ associated with the partial flag variety $G/P_{\lambda_{dom}}$, and $w', w'' \in W, w \in \tilde{W}$. 

The dimension of $I^{\gamma'} \cap S_{w_0}^{\gamma''}$ in the affine flag variety for $G$ is less than or equal to 

$\height(\lambda_{dom} + \mu) - \dim(G/P_{\lambda_{dom}}) + \dim (X_w) + $

$\left\{ \begin{array}{rcl}
 \dim(G/B) - l(w') & \mbox{for} & w' = w'' \in W \\ 
 l(w') - l(w'') & \mbox{for} & w' > w'' \in W 
\end{array} \right.$

If $w'' > w'$, then the intersection is empty. 

\end{corollary}

In the future, it would be very useful to generalize results in this paper to algebraic groups of other types. At the moment, very few examples in other types for the central degeneration are known.

A natural new direction would be to generalize the construction of crystal structures on the set of MV cycles \cite{BrGai, Kam2} to other classes of objects in the affine Grassmannian and affine flag variety, like Iwahori MV cycles and generalized MV cycles. 

We would also like to generalize the work of Kamnitzer \cite{Kam1} and develop a theory of moment polytopes for Iwahori MV cycles and generalized MV cycles in the affine flag variety. That could potentially shed some light on questions related to emptiness and dimensions of affine Deligne-Lusztig varieties. 

In \cite{Gai}, Gaitsgory constructed some central sheaves in $\mathcal{P}_I(Fl)$ by considering the nearby cycles of $\mathcal{P}_{G(\mathcal{O})}(Gr)$. In this paper we discovered that the central degeneration behaves well with respect to semi-infinite orbits. Therefore we would like to apply the nearby cycles functor for the global affine flag variety to the category of $U$ or $U^-$ equivariant perverse sheaves on the affine Grassmannian. 

\subsection{Organization}

The layout of this paper is as follows. Sections 2-3 consist of background material. Section 4 is about the degenerations of semi-infinite orbits. Section 5 contains a discussion of the special fiber limits of MV cycles, by comparing them with GGMS strata and generalized MV cycles in the affine flag variety, and by looking at the moment polytopes of different irreducible components. There is also a main example worked out in details, with lots of illustrative diagrams. In section 6, we include a study of Iwahori MV cycles in the affine Grassmannian and generalized MV cycles in the affine flag variety, as they are closely related to the special fiber limits of MV cycles.

\subsection{Acknowledgments}

I would first like to express my deep gratitude towards my PhD advisor David Nadler for his kind guidance, encouragement and support.  

I am very grateful to Zhiwei Yun for many helpful discussions and meetings for this project. His expertise in the relevant subjects and sharp attention to details greatly propelled me to move forward in my research. 

Moreover, I would like to thank Joel Kamnitzer for explaining to me many important properties of MV cycles and MV polytopes, and for his hospitality during my Toronto visits. I am also indebted to Allen Knutson for teaching me lots of related algebraic geometry and toric geometry. I thank Xinwen Zhu for patiently explaining his work to me, and for suggesting related future projects. I also thank Kevin Costello and Ben Webster for their support and guidance during my time at Perimeter Institute. 

In addition, I benefited from conversations and correspondences with Denis Auroux, Tom Braden, Alexander Braverman, Catherine Cannizzo, Justin Chen, Ben Elias, Davide Gaiotto, Dennis Gaitsgory, Benjamin Gammage, Ulrich G$\ddot{\text{o}}$rtz, Sam Gunningham, Thomas Haines, Xuhua He, Lisa Jeffrey, Thomas Lam, Ian Le, Penghui Li, Dinakar Muthiah, Ivan Mirkovi$\acute{\text{c}}$, James Parkinson, Alexander Postnikov, Arun Ram, Nicolai Reshetikhin, Steven Sam, Vivek Shende, Bernd Sturmfels, Monica Vazirani, Kari Vilonen, Alex Weekes, Lauren Williams and Alex Zorn. 

I would like to sincerely thank the anonymous referee(s) for giving valuable feedback on two earlier versions of this paper. I am also indebted to the support from University of California at Berkeley, Mathematical Sciences Research Institute and Perimeter Institute for Theoretical Physics. 

Research at Perimeter Institute is supported in part by the Government of Canada through the Department of Innovation, Science and Economic Development Canada and by the Province of Ontario through the Ministry of Economic Development, Job Creation and Trade. 

Finally I am very thankful to my family and friends. Their psychological support is invaluable during the ups and downs of this project.

\section{Affine Grassmannian, affine flag variety, and the Central Degeneration}

\subsection{Affine Grassmannian and affine flag variety}

\subsubsection{Loop group and loop algebra}


Let $G$ be a connected, reductive algebraic group over the field $k = \mathbb{C}$. Let $T$ be a maximal torus of $G$, and let $X^*(T) = \Hom(T, \mathbb{C}^*), X_*(T) = \Hom(\mathbb{C}^*, T)$ and $\Lambda$ denote the weight, coweight and root lattice of $G$ respectively. Let $W = N(T)/T$ denote the Weyl group. Let $B$ be a Borel subgroup of $G$ containing $T$. Let $\alpha_1, ..., \alpha_r$ and 
$\alpha_1^{\vee}, ..., \alpha_r^{\vee}$ denote the simple roots and coroots of $G$ with respect to 
$B$. Let $N$ denote the unipotent radical of $B$. Let $\triangle$ and $\triangle^+$ denote the set of roots and positive roots of $G$.


Let $\mathcal{K}$ denote the local field of Laurent series $\mathbb{C}((t))$, and $\mathcal{O}$ denote the ring of formal power series $\mathbb{C}[[t]]$. The group scheme $G(\mathcal{K})$ is also called the loop group $LG$, as we may think of it as a completion of the group of polynomial maps from $S^1 \hookrightarrow \mathbb{C}^*$ to $G$. The group scheme $G(\mathcal{O}) \subset G(\mathcal{K})$ is a maximal compact subgroup of $G(\mathcal{K})$, and is called the positive loop group $L^+G$, as it is the subgroup of the maps $\mathbb{C}^* \rightarrow G$ in $LG$ that can be extended to $0 \in \mathbb{C}$. There is a map $ev_0: G(\mathcal{O}) \rightarrow G$ by evaluating at $t = 0$. Let the Iwahori subgroup $I$ denote the pre-image of $B$ under $ev_0$. When $G = GL_n$, $G(\mathcal{K})$ is the group of invertible matrices with entries in $\mathcal{K}$; $G(\mathcal{O})$ is the group of invertible matrices with entries in $\mathcal{O}$; the Iwahori subgroup $I$ is the subgroup of $G(\mathcal{O})$ whose entries below the diagonal lie in $t\mathcal{O}$. 


The loop group has an extended torus $T \times \mathbb{C}^*$ such that $\mathbb{C}^*$ acts as loop rotation by scaling $t$. Its Weyl group is the affine Weyl group $W_{aff}$, which is a semi-direct product of $X_*(T)$ and the finite Weyl group $W$. 
Just like in the case of Lie groups, there is also a notion of Lie algebra and roots for the loop group. Consider the characters of the extended torus. Let $\delta$ denote the character which is trivial on $T$ and identity on the rotation torus. We also view the characters of $T$ as characters on the extended torus, which is trivial on the rotation torus. The roots of $LG$ are given by $ \{ m\delta + \alpha = \mathfrak{g}_{\alpha}z^m, k \in \mathbb{Z}, \alpha \in \Delta \cup \{ 0 \} \}$, which we call affine roots. 
The Lie algebra of the loop group $LG$ is the loop algebra 
$L\mathfrak{g} = (\oplus_{m \in \mathbb{Z}} \mathfrak{t} \cdot z^m) \oplus (\oplus_{(m, \alpha)} \mathfrak{g}_{\alpha}z^m)$.

For further details, see \cite{PS}.

\subsubsection{Affine Grassmannian and affine flag variety}

The affine Grassmannian is the ind-scheme $G(\mathcal{K})/G(\mathcal{O})$. There is a sequence of finite type projective schemes $Gr^i$, $i \in \mathbb{N}$ and closed immersions $Gr^i \hookrightarrow Gr^{i + 1}$, such that for a scheme $S$, $Gr(S) = \lim_{i \rightarrow \infty} Hom(S, Gr^i)$. The affine flag variety is the ind-scheme $G(\mathcal{K})/I$.  

There is an interpretation of the affine Grassmannian and affine flag variety in terms of principal $G-$bundles, due to Beauville-Laszlo. 
The affine Grassmannian $Gr$ is the functor that associates each scheme $S$ the set of pairs $\{ \mathcal{P}, \phi \}$, where $\mathcal{P}$ is an $S-$family of $G-$bundles over the formal disc $\mathbb{D}, \phi: \mathcal{P}|_{\mathbb{D}^*} \rightarrow \mathcal{P}^0|_{\mathbb{D}^*}$ is a trivialization of $\mathcal{P}$ on $\mathbb{D^*}$. The (complete) affine flag variety $Fl$ assigns to each scheme $S$ all the data in $Gr$ plus a $B-$reduction of the principal $G-$bundle $\mathcal{P}$ at $\{0\} \in \mathbb{D}$. There is a natural projection map $Fl \rightarrow Gr$ with fibers being isomorphic to the ordinary flag variety $G/B$.

In type A, there is a combinatorial model for the affine Grassmannian and affine flag variety. Let $V = \mathcal{K}^n$, a vector space over $\mathcal{K}$ with the natural action of $G(\mathcal{K})$. A lattice $L$ in $V$ is a rank $n$ $\mathcal{O}-$submodule such that there exists $N \gg 0, t^NL_0 \subset L \subset t^{-N}L_0$, where $L_0$ denotes the standard $\mathcal{O}$-module $\mathcal{O}^n$. More generally, for any $k-$algebra $R$, an $R-$family of lattices in $V$ is a finitely generated projective 
$R[[t]]-$submodule $M$ of $R((t))^n$ such that $M \otimes_{R[[t]]}R((t)) = R((t))^n$ \cite{Go1, Zhu2}. The relative dimension of a lattice $L$ is given by the difference $\dim(L \backslash L_0) - \dim(L_0 \backslash L)$. 

When $G = GL_n(\mathbb{C})$, the affine Grassmannian for $G$ is isomorphic to the moduli functor which assigns every $k-$algebra $R$ the set of $R-$families of lattices in $k((t))^n$. When $G = SL_n(\mathbb{C})$, then the affine Grassmannian is isomorphic to the moduli spaces of lattices in $V$ with zero relative dimension. Note that the affine Grassmannian for a semisimple group like $SL_n(\mathbb{C})$ is reduced. For further details see \cite{Go1, Mag, PS, Zhu2}. 

Each lattice $L$ can be written as a direct sum 
$L = \mathcal{O}v_1 \oplus \cdots \mathcal{O}v_n$, where $\{ v_1, ..., v_n\}$ is a $\mathcal{K}-$basis of $\mathcal{K}^n$. By choosing the standard basis $\{ e_1, e_2, \cdots, e_n\}$ of $\mathbb{C}^n$, we can represent the lattices in some pictures. Note that $\mathcal{B} = \{ t^m \cdot e_i | m \in \mathbb{Z}, i = 1, 2, ..., n \}$ form a $\mathbb{C}-$basis of $\mathcal{K}^n$, in the sense appropriate to a topological vector space with the $t-$adic topology. For example, when 
$n = 2$, if $L = \mathcal{O} \cdot t^{-1}e_1 \oplus \mathcal{O} \cdot t^2e_2$, $L$ can be represented as below:

\includegraphics[scale = 0.9]{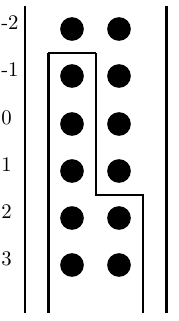}

In the diagram above, each dot represents an element in $\mathcal{B}$. Given a lattice $L$ with a basis $\mathcal{B}_L$, we express each element of $\mathcal{B}_L$ as a linear combination of the elements in $\mathcal{B}$, and the vertical straight lines are used to illustrate the fact that $L$ is an $\mathcal{O}-$module. 

The projective schemes $Gr^i$ consist of all the lattices $L$ such that $t^iL_0 \subset L \subset t^{-i}L_0$. Then $Gr^i$ is a union of ordinary Grassmannians in $(t^{-i}L_0)/(t^iL_0)$ with the extra condition $tS \subset S$ for any subspace $S$ in these ordinary Grassmannians. 

The (complete) affine flag variety $Fl$ is the quotient $G(\mathcal{K})/I$. In type A, there is also a lattice picture for $Fl$. It is the space of complete flags of lattices $L_{\cdot} = (L_1 \supset \cdots \supset L_n)$. 
Each $L_i$ is a lattice such that $L_n \supset tL_1$ and $\dim(L_j / L_{j + 1}) = 1$.

For the rest of this paper, it suffices to consider the affine Grassmannian and affine flag variety as reduced ind-schemes. 

We would like to introduce some interesting orbits in the affine Grassmannian and the affine flag variety by starting from the orbits of affine root subgroups. Given an affine root $\alpha$, we have a one parameter subgroup $U_{\alpha}$ in the loop group $LG$ generated by the exponential of $\alpha$ in the loop algebra $L\mathfrak{g}$.

\begin{prop}\cite{GKM}
\label{rootfixedptsprop}

In the affine Grassmannian or the affine flag variety, every $T \times \mathbb{C}^*-$invariant $\mathbb{P}^1$ is the closure of one orbit of $U_{\alpha}$, for some affine root $\alpha$. In particular, there is a discrete number of one-dimensional $T \times \mathbb{C}^*$ orbits in $Gr$ and $Fl$.

Let $\gamma$ be a $T-$fixed point in the affine Grassmannian or the affine flag variety, $\alpha = \alpha_0 + k \delta$ be an affine root, where $\alpha_0$ is a root of $G$, $k \in \mathbb{Z}$, and $\delta$ is the imaginary root. Let $s_{\alpha}$ be the simple reflection in the affine Weyl group for the hyperplane $H_{\alpha}$, where  
$H_{\alpha} = \{ \beta | <\alpha_0, \beta > = k \}$. Then

 $$\lim_{\eta \rightarrow \infty} \exp(\eta \cdot \alpha) \cdot \gamma = s_{\alpha} \cdot \gamma. $$
 
In other words, there is a unique one-dimensional $U_{\alpha}$ orbit whose closure is a $\mathbb{P}^1$ connecting $\gamma$ and the $T$-fixed point indexed by $s_{\alpha} \cdot \gamma$.
 
\end{prop}

\subsubsection{Semi-infinite orbits}

Semi-infinite orbits in the affine Grassmannian or the affine flag variety are the orbits of the group $U_w = N_w(\mathcal{K})$. Their study is motivated by principal series representations of p-adic groups. They are infinite-dimensional and are indexed by $X_*(T)$ in the affine Grassmannian and $W_{aff}$ in the affine flag variety.  
We view them as ind-varieties. In particular, their intersections with the closure of any $G(\mathcal{O})$ orbit in $Gr$ is an algebraic variety \cite{MV}. For each semi-infinite orbit 
$S_w^{\gamma}, w \in W$ in $Gr$, where $\gamma \in X_*(T)$, or $Fl$, where $\gamma \in W_{aff}$, it is the attracting set of the $T-$fixed point $p^{\gamma}$ indexed by $\gamma$ with the $\mathbb{C}^* \hookrightarrow T$ action given by
$2 w\cdot \hat{\rho}$, where $\hat{\rho}$ is the sum of positive coroots. This gives a decomposition of the affine Grassmannian and affine flag variety into the orbits of $N_w(\mathcal{K})$. 

The Gelfand-Goresky-Macpherson-Serganova (GGMS) strata \cite{Kam1} on the affine Grassmannian or the affine flag variety are the intersections of $|W|$ many semi-infinite orbits. More precisely, given any collection $\{ \alpha_w, w \in W \}$ of elements in $X_*(T)$ or $W_{aff}$, we can form the GGMS stratum $\cap_{w \in W} S_w^{\alpha_w}$.
Note that the closure of each GGMS stratum is an algebraic variety as we view semi-infinite orbits as ind-varieties. 

The following proposition connects GGMS strata with any irreducible $T-$invariant scheme. It essentially follows from section 3.3 in \cite{Kam1}.

\begin{prop}
\label{GGMSirreprop}
Given any $T-$invariant closed irreducible scheme $S$ in the affine Grassmannian or the affine flag variety, there exists a GGMS stratum $\cap_{w \in W} S_w^{\alpha_w}$ whose closure contains $S$, and the $T-$fixed points indexed by $\alpha_w, w \in W$ lie in $S$.
\end{prop}

\begin{proof}
For any $w \in W$, the $\mathbb{C}^*$ action given by the coweight $2 w\cdot \hat{\rho}$ for each $w \in W$ gives a decomposition of $S$ according to the attracting sets of different fixed points. By irreducibility, there exists a unique $T-$fixed point $p^{\alpha_w}$ (indexed by $\alpha_w$) whose attracting set $C_w \subset S$ is top-dimensional. Given any orbit $l$ of $\mathbb{C}^*$ in $S$ with $2 w\cdot \hat{\rho}$ action, $p^{\alpha_w}$ is contained in the closure of $l$. Then $p^{\alpha_w} \in S$ since $S$ is closed. 

For each $w \in W$, the corresponding $C_w$ is dense in $S$, and so is their intersection $\cap_{w \in W} C_w$. 
Therefore, $S = \overline{\cap_{w \in W} C_w} \subseteq \overline{\cap_{w \in W} S_w^{\alpha_w}}$. 
\end{proof}

\subsubsection{Iwahori orbits}

In the affine Grassmannian, Iwahori orbits are indexed by elements in $X_*(T)$, and are vector bundles over ordinary Schubert cells. In the affine flag variety, Iwahori orbits are indexed by $W_{aff}$. The closure of each Iwahori orbit $I^{\gamma}, \gamma \in W_{aff}$ in the affine flag variety is the union of Iwahori orbits $I^{\gamma'}$, where there exists a reduced expression of $\gamma' \in W_{aff}$ which is a sub-word for a reduced expression of $\gamma$. This is the Bruhat order for the affine Weyl group $W_{aff}$, and is in fact independent of the choice of the reduced expressions of $\gamma$.

Because Iwahori orbits are also the orbits of $ev_0^{-1}(N)$, they are affine spaces. Therefore the affine Grassmannian and affine flag variety have a cell decomposition as a union of Iwahori orbits. The homology/cohomology basis given by Iwahori orbits are called the Schubert basis, and the closures of Iwahori orbits are called affine Schubert varieties.

\subsubsection{MV cycles, Iwahori MV cycles and generalized MV cycles}

In \cite{MV}, each MV cycle is defined to be the closure of an irreducible component of the intersection of a $G(\mathcal{O})$ orbit $Gr^{\lambda}$ and a $U^-$ orbit $S_{w_0}^{\mu}$ in the affine Grassmannian, where $\lambda$ is a dominant coweight and $\mu$ is a coweight that belongs to the convex hull of the set $\{ w \cdot \lambda| w \in W \}$.  In \cite{And}, MV cycles for $\overline{Gr^{\lambda}}$, relative to $N$, are the irreducible components of 
$\overline{Gr^{\lambda} \cap S_{e}^{\mu}}$. Equivalently they are the irreducible components of $\overline{S_{e}^{\mu} \cap S_{w_0}^{\lambda}}$. They give a canonical basis of the highest weight representations of the Langlands dual group \cite{MV}. Each MV cycle is also equal to the closure of a GGMS stratum with some extra conditions on the coweights involved \cite{And, Kam1, MV}.  

Similarly, we define an Iwahori MV cycle in the affine Grassmannian to be the closure of an irreducible component of the intersection of an Iwahori orbit $I^{\lambda}, \lambda \in X_*(T)$ and a $U^-$ orbit $S_{w_0}^{\mu}, \mu \in X_*(T)$.

In the affine flag variety, a Generalized MV cycle is defined to be the closure of an irreducible component for the intersection of an Iwahori orbit and a $U^-$ orbit \cite{PRS}. 

\subsection{Central Degeneration}


Now we would like to define the global analogs of the affine Grassmannian and the affine flag variety, as explained in \cite{Gai}. Let $C$ be a curve with a distinguished point $x$. The global affine Grassmannian over $C$ is the following functor: for any scheme $S$, $\Hom(S, Gr_C)$ is the set of triples $(y, \mathcal{P}, \phi)$, where $y$ is an $S$ point of $C$, $\mathcal{P}$ is a principal $G-$bundles on $C \times S$, and $\phi$ is a trivialization of $\mathcal{P}$ on $C \times S \backslash \Gamma_y$, where $\Gamma_y$ is the graph of $y: S \rightarrow C$. Similarly, the global affine flag variety $Fl_C$ over a curve represents the following functor: for any scheme $S$, $\Hom(S, Fl_C)$ is the set of quadruples $(y, \mathcal{P}, \phi, \zeta)$, where $(y, \mathcal{P}, \phi) \in Gr_C$ and $\zeta$ is a reduction of $\mathcal{P}|_{x \times S}$ to $B \subset G$. We have canonical isomorphisms 
$Fl_{\{ y \} \in C \backslash \{ x \}} \cong Gr_{ \{ y \} } \times G/B$ and $Fl_{ \{ x \}} \cong Fl$.

Specializing to the curve $\mathbb{A}^1$, there is a group theoretic definition of the global affine flag variety $Fl_{\mathbb{A}^1}$. It is the moduli space of the pairs $\{ (\epsilon \in \mathbb{A}^1, p \in G(k[t, t^{-1}])/I_{\epsilon}) \}$, where $I_{\epsilon}$ is the pre-image of the Borel subgroup $B$ under the map $G(k[t]) \rightarrow G$ by evaluating at $t = \epsilon \in \mathbb{A}^1$. Topologically $I_{\epsilon}$ are algebraic maps $\mathbb{A}^1 \rightarrow G$ which sends $\{ \epsilon \}$ to $B$. Each fiber $Fl_{\mathbb{A}^1}|_{\epsilon} = G(k[t, t^{-1}])/I_{\epsilon}$ has a natural map to the affine Grassmannian $Gr = G(k[t, t^{-1}])/G(k[t])$. When $\epsilon \neq 0$, $Fl_{\mathbb{A}^1}|_{\epsilon}$ has a map to $G/B$ by evaluating at $t = \epsilon$, so it is isomorphic to $Gr \times G/B$. When $\epsilon = 0$, the fiber $G(k[t, t^{-1}])/I_{0}$ is isomorphic to the affine flag variety. Equivalently, $Fl_{\mathbb{A}^1} = \{ (\epsilon \in \mathbb{A}^1, p \in G(k[(t - \epsilon), (t - \epsilon)^{-1}])/I) \}$.

We have the following commutative diagram:

\xymatrix{
Gr \times G/B \ar[d] \ar@{^{(}->}[r] &Gr \times G/B \times (\mathbb{A}^1 \backslash \{ 0 \})\ar[d] \ar@{^{(}->}[r] & Fl_{\mathbb{A}^1} \ar[d] & Fl \ar[d] \ar@{_{(}->}[l]\\
\{ \epsilon \neq 0  \} \ar@{^{(}->}[r] & \mathbb{A}^1 \backslash \{ 0\} \ar@{^{(}->}[r] & \mathbb{A}^1 & \{0 \} \ar@{_{(}->}[l]} 


In type A, there is also a lattice picture of $Fl_{\mathbb{A}^1}$. When $G = GL_n(\mathbb{C})$, the global affine flag variety over $\mathbb{A}^1$(with its reduced structure) is isomorphic to the moduli space of triples 
$(\epsilon, L, f)$, where $\epsilon \in \mathbb{A}^1$, $L$ is a rank $n$ $k[t]$ module in $k[t, t^{-1}]^n$ , and $f$ is a flag in the quotient $L/(t - \epsilon)L$. When $\epsilon = 0$, we recover the lattice picture of the affine flag variety in type A. 
When $\epsilon \neq 0$, there is a canonical isomorphism $L/(t - \epsilon)L \cong k^n$, and we recover a lattice in the affine Grassmannian and a flag in $k^n$. Equivalently, the lattice model of $Fl_{\mathbb{A}^1}$ in type A (with its reduced structure) is isomorphic to the space of triples $(\epsilon, L, f)$, where $\epsilon \in \mathbb{A}^1$, $L$ is a rank $n$ $k[(t - \epsilon)]$-module in $k[(t - \epsilon), (t - \epsilon)^{-1}]^n$, and $f$ is a flag in the quotient $L/tL$.


We aim to understand this construction more explicitly by examining how some interesting torus invariant spaces in the affine Grassmannian degenerate.

\begin{definition}

The central family $F_{\text{cen}}$ in the global affine flag variety is the flat family with general fibers being $Gr \times \{ e \}$ in $Fl_{\mathbb{A}^1}$. The central degeneration of a $T-$invariant scheme $S$ in the affine Grassmannian is the degeneration of $S \times \{ e \} \subset Gr \times \{ e \} \subset Gr \times G/B \cong Fl_{\{ \epsilon \neq 0 \}}$ in the global affine flag variety $Fl_{\mathbb{A}^1}$. The special fiber limit $\tilde{S}$ of $S$ under the central degeneration is defined to be the intersection of the closure of a family of $S$ in the general fibers of $Fl_{\mathbb{A}^1}$ with the special fiber $Fl_{\{ 0 \}}$, i.e. $\overline{S \times (\mathbb{A}^1 \backslash \{ 0\})} \cap Fl_{\{ 0 \}}$.

\end{definition}

\begin{remark}

When the underlying curve $C$ for the global affine flag variety is not $\mathbb{A}^1$, we can define the central degeneration of a $T-$invariant scheme $S$ when $S$ is invariant under $Aut(\mathcal{D})$. 

\end{remark}


The term ``central degeneration" comes from Gaitsgory's construction of central sheaves. Algebraically, the center of the Iwahori affine Hecke algebra $H_I = F_c(I \backslash G(\mathcal{K}) / I)$ is isomorphic to the spherical Hecke algebra 
$H_{sph} = F_c( G(\mathcal{O}) \backslash G(\mathcal{K}) / G(\mathcal{O}))$, with the algebra structure given by convolution of compactly-supported functions. 
In \cite{Gai} Gaitsgory constructed a map from $H_{sph}$ to $Z(H_I)$ geometrically, through the nearby cycles functor from the category of $G(\mathcal{O})$ equivariant perverse sheaves on the affine Grassmannian $P_{G_{\mathcal{O}}}(Gr)$ to the category of Iwahori equivariant perverse sheaves on the affine flag variety $\mathcal{P}_I(Fl)$.  

Gaitsgory's construction involves the degeneration of $Gr \times \{ e \} \subset Gr \times G/B$ in the global affine flag variety. This degeneration corresponds to a linear algebraic model \cite{Go, Ha, HaNg} from local models of Shimura varieties in number theory, as explained below.

\begin{definition}[G. Laumon]

Let $n_{-} \leq 0 \leq n_+$ be two integers, and $d = n_+ - n_-$. 

Let $M_{r, n_{\pm}}$ be the functor which associates to each $\mathcal{O}-$algebra $R$ the set of $n-$tuples $L_{\cdot} = (L_0, ..., L_{n - 1})$ where $L_0, ..., L_{n - 1}$ are $R[t]$ submodules of $t^{n_-}R[t]^n/t^{n_+}R[t]^n$, satisfying the following properties:

\begin{itemize}
  \item as $R-$modules, $L_0, ..., L_{n - 1}$ are locally direct factors with rank $nd - r$ in $t^{n_-}R[t]^n/t^{n_+}R[t]^n$. The positive integer $r$ is the dimension of each $L_i, i = 0, ..., n - 1$, as an $R-$submodule,
  
  \item $\gamma(L_0) \subseteq L_{1}, \gamma(L_1) \subseteq L_{2}, ..., \gamma(L_{n-1}) \subseteq L_{0}$, where $\gamma$ is the matrix 
 $$
\begin{bmatrix}
0 &  1  & \ldots & 0 \\
\vdots  & \ddots & \ddots & \vdots \\
\vdots & \ddots & \ddots & 1\\
t + \epsilon  &   0      &\ldots & 0
\end{bmatrix}.
$$
\end{itemize}

\end{definition}

%
%

The linear algebraic model defined above embeds in the global affine flag variety.

\begin{prop}
The linear algebraic model defined above is isomorphic to the subfamily in the lattice model for the global affine flag variety by fixing the flag $f$ to be the standard flag when $\epsilon \neq 0$.
\end{prop}

\begin{proof}

For each $\mathbb{C}[t]$ module $L_i$ in the collection $L_{\cdot}$ specified in the functor $M_{r, n_{\pm}}(\mathbb{C})$, it can be identified with a lattice $L'_i$ in $\mathbb{C}[t, t^{-1}]^n$ such that 
$t^{d_1}\mathbb{C}[t]^n \subset L'_i \subset t^{d_2}\mathbb{C}[t]$ and $d_2 - d_1 = d$.
From the lattices $L'_{\cdot} = \{ L'_0,...,L'_{n-1} \}$ we can get another sequence of lattices
$L'_0 \supset \gamma_n \cdot L'_{n - 1} \supset \gamma_n \cdot \gamma_{n-1} \cdot L'_{n - 2} \supset \cdots \supset (t - \epsilon) L'_0$. When $\epsilon \neq 0$, this sequence specifies a lattice in $Gr$ and the standard flag. When $\epsilon = 0$, this sequence specifies a point in the affine flag variety. We can define a map from the lattice model to $M_{r, n_{\pm}}(\mathbb{C})$ in an analogous way. The same argument works when we replace $\mathbb{C}$ with any $\mathbb{C}$-algebra $R$. Therefore, they are isomorphic. 

\end{proof}


Now let's make a few more remarks about the central degeneration of the closures of $G(\mathcal{O})$ orbits, which was studied for Gaitsgory's central sheaves \cite{Gai}. 

Given $\gamma_1, \gamma_2 \in W_{aff}$, $\gamma_2$ is $\gamma_1-$admissible if there exists $w \in W$ such that $\gamma_2 \leq w \cdot \gamma_1$ according to the usual Bruhat order. In \cite{Zhu}, it was proved that the special fiber limit of the closure of a $G(\mathcal{O})$ orbit $\overline{Gr^{\lambda}}$ in the affine Grassmannian is the union of Iwahori orbits $I^{\lambda'}$ in the affine flag variety, where $\lambda'$ is a $(\lambda, e)-$admissible element in the affine Weyl group. 

When $G = GL_n$ and $\lambda$ is a minuscule coweight, $Gr^{\lambda}$ is an ordinary Grassmannian. In \cite{Go}, the explicit equations for the central degeneration of these ordinary Grassmannians are obtained from studying the linear algebraic functor $M_{r, n_{\pm}}$ described above by focusing on the $d = 1$ case. 

An interesting basic example was worked out for $G = GL_2(\mathbb{C})$ or $PGL_2(\mathbb{C})$ in \cite{Gai}.

\begin{example}

Consider the closure of the minuscule $G(\mathcal{O})-$orbit $Y_0 = \{$lattices $L$ which are contained in the standard lattice $\mathcal{O} \oplus \mathcal{O}$ with $\dim(\mathcal{O} \oplus \mathcal{O} \backslash L) = 1 \}$. By construction, $Y_0$ is isomorphic to $\mathbb{P}_1$. The special fiber limit of $Y_0 \times \{e\}$ in the affine flag variety $Fl$ is isomorphic to a union of two $\mathbb{P}^1$s intersecting at a point. Locally, this is the family $\{ xy = c | c \in k \}$. 

\end{example}

\subsection{Global group scheme}

We discussed earlier that the affine Grassmannian and affine flag variety admit a natural action of the loop group $G(\mathcal{K})$ and its subgroups. As for the global affine flag variety $Fl_C$, we can similarly define global group schemes which act naturally on it. Recall the moduli description of the global affine flag variety $Fl_C$ \cite{Gai}. There exists a global group scheme $G(\mathcal{K})_{C}$ that acts on $Fl_C$. When restricted to each fiber of $Fl_{C}$ above a point $c \in C$, $G(\mathcal{K})_{C}$ acts like $G(\mathcal{K})$ on the trivializations of a principal $G-$bundle $\mathcal{P}_G$ over the formal punctured disk near 
$c \in \mathcal{D}^*_c$. It also acts on the $B-$reduction at the special point by changing the gluing data of the principal $G-$bundle $\mathcal{P}_G$. For some subgroups of $G(\mathcal{\mathcal{K}})$ such global versions also exist. 

More concretely, if we fix the curve to be $\mathbb{A}^1$ and $G = GL_n(\mathbb{C})$ or any matrix Lie group, then $G(\mathcal{K})_{\mathbb{A}^1}$ can be represented by an $\mathbb{A}^1$ family of matrices with entries in $\mathbb{C}[(t - \epsilon), (t - \epsilon)^{-1}], \epsilon \in \mathbb{A}^1$. In this subsection, we will discuss global versions of $T \subset G$, $G(\mathcal{K})$, $G(\mathcal{O})$, $N_w(\mathcal{K})$ and affine root subgroups in this special case. 

When $G = GL_n(\mathbb{C})$, the global group scheme $G(\mathcal{K})_{\mathbb{A}^1}$ is the $\mathbb{A}^1-$family of nonsingular matrices $G([(t - \epsilon), (t - \epsilon)^{-1}]), \epsilon \in \mathbb{A}^1$. When $G = SL_n(\mathbb{C})$, $G(\mathcal{K})_{\mathbb{A}^1}$ is the subgroup in $GL_n(\mathcal{K})_{\mathbb{A}^1}$ with determinant-one matrices. We can extend this construction of $G(\mathcal{K})_{\mathbb{A}^1}$ to any matrix Lie group. Similarly, the global group scheme $G(\mathcal{O})_{\mathbb{A}^1}$ is the subgroup in $G(\mathcal{K})_{\mathbb{A}^1}$ such that the matrices have $\mathbb{C}[(t - \epsilon)], \epsilon \in \mathbb{A}^1$ coefficients. 


Each matrix in $G(\mathcal{K})_{\mathbb{A}^1}$ acts naturally on $Fl_{\mathbb{A}^1}$. More explicitly, given $\epsilon' \in \mathbb{A}^1$, $G([(t - \epsilon'), (t - \epsilon')^{-1}])$ acts naturally on the fibers 
$G([(t - \epsilon), (t - \epsilon)^{-1}])/I, \epsilon \in \mathbb{A}^1$ of the global affine flag variety. When $\epsilon = \epsilon'$, this action near $\epsilon$ is equal to the natural action of $G(\mathcal{K})$ on the affine flag variety if $\epsilon = 0$, and is equal to the natural action of $G(\mathcal{K}) \times G(\mathcal{K})|_{t = 0}$ on $Gr \times G/B$ if $\epsilon \neq 0$. When $\epsilon \neq \epsilon'$, the action of $G([(t - \epsilon'), (t - \epsilon')^{-1}])$ near $\epsilon$ can be seen more clearly by rewriting its matrix coefficients as Taylor expansions near $\epsilon$. 

\begin{prop}
\label{globalTcoro}

The global group scheme $G(\mathcal{K})_{\mathbb{A}^1}$ has a global subgroup $G_{\mathbb{A}^1}$. It acts as $G$ on each fiber of the global affine flag variety, and therefore its action is constant. 

Similarly, the global version of the maximal torus $T \subset G$, $T_{\mathbb{A}^1}$, also has a constant action on the global affine flag variety. 

\end{prop}

Next we would like to construct global versions of $N_w(\mathcal{K})$ and global affine root subgroups. 

For each $w \in W$, the group $N_w(\mathcal{K})$ is a finite product of groups of the form $U_{\alpha_0}(\mathcal{K})$, where $\alpha_0$ is a finite root of $G$.  For each $U_{\alpha_0}(\mathcal{K})$, there is a filtration by subgroups 
$U_{\alpha_0, m}(\mathcal{K}) = \{ x \in U_{\alpha_0}(\mathcal{K})| val(x) \geq m, m \in \mathbb{Z}\}$. 
Also note that $U_{\alpha_0}(\mathcal{K}) = \lim_{m \rightarrow - \infty} U_{\alpha_0}(\mathcal{K})$. All of these remain true when we replace Laurent series coefficients with $\mathbb{C}[(t - \epsilon), (t - \epsilon)^{-1}], \epsilon \in \mathbb{A}^1$ coefficients. 


When $G = GL_n(\mathbb{C})$ or any matrix Lie group, the global group schemes $N_w(\mathcal{K})_{\mathbb{A}^1}, w \in W$ is the global subgroup of $G(\mathcal{K})_{\mathbb{A}^1}$ whose matrix elements belong to 
$N_w([(t - \epsilon), (t - \epsilon)^{-1}]), \epsilon \in \mathbb{A}^1$. Similarly, for any finite root $\alpha_0$ and integer $m$, there exist global subgroups $U_{\alpha_0}(\mathcal{K})_{\mathbb{A}^1}$ and  $U_{\alpha_0, m}(\mathcal{K})_{\mathbb{A}^1}$ whose matrix entries belong to $U_{\alpha_0}([(t - \epsilon), (t - \epsilon)^{-1}]), \epsilon \in \mathbb{A}^1$ and $U_{\alpha_0, m}([(t - \epsilon), (t - \epsilon)^{-1}]), \epsilon \in \mathbb{A}^1$ respectively. 
The relations between $N_w(\mathcal{K})$, $U_{\alpha_0}(\mathcal{K})$ and $U_{\alpha_0, m}(\mathcal{K})$ still hold for their global versions.

In \cite{Zhu}, there are related discussions of global group schemes in a more general setting. 

\section{Moment polytopes for the Central Degeneration}

In this section, we lay the foundation for building connections between the central degeneration and toric geometry. We first introduce alcoves following \cite{PS}, as well as the alcove lattice as defined in \cite{LP1}. Our convex polytopes lie in the alcove lattice. Then we discuss torus equivariant moment polytopes for the global affine flag variety. 

\subsection{Alcove lattice}

In ordinary Lie theory one usually thinks of the weight lattice $X^*(T)$, and hence the roots, as lying in the real vector space $\mathfrak{t}^*_{\mathbb{R}}$. For affine roots, one can think of them as linear forms on the Lie algebra 
$\mathbb{R} \times \mathfrak{t}_{\mathbb{R}}$ of the extended torus. However, it is more convenient to regard them as affine-linear functions on $\mathfrak{t}_{\mathbb{R}}$. Then for nonzero root $\alpha$, the affine root $m \delta + \alpha$ is determined up to sign by the affine hyperplane $$H_{m, \alpha} = \{  \xi \in \mathfrak{t}_{\mathbb{R}}: (\alpha, \xi) = -m\}$$ in $\mathfrak{t}_{\mathbb{R}}$ where it vanishes. This collection of hyperplanes is called the diagram of $LG$, and lie on the coweight lattice. 

The connected components in $\mathfrak{t}_{\mathbb{R}}$ of the complement of the hyperplanes $H_{m, \alpha}$ are called the alcoves of the diagram. Recall that in ordinary Lie theory, the connected components in the complement of the $H_{0, \alpha}$ (which form the diagram of $G$) are called the chambers. Each chamber $C$  contains a unique alcove $C_0$ whose closure contains the origin. If one chooses a chamber $C$ then the roots of $G$ are called positive or negative according to their sign on $C$. Similarly, an affine root is called positive or negative according to its sign on $C_0$. The positive affine roots corresponding to the walls of $C_0$ are called the simple affine roots \cite{PS}. 

The affine Weyl group $W_{aff} = N(T \times \mathbb{C}^*)/(T \times \mathbb{C}^*)$ is isomorphic to a semi-direct product of the coweight lattice and the finite Weyl group. It acts transitively on the set of alcoves when $G$ is simply connected, e.g. $G = SL_n(\mathbb{C})$ or $SU_n(\mathbb{C})$. The $A_1$ and $A_2$ root systems with alcoves labeled by elements in the corresponding affine Weyl groups are shown below.  

\includegraphics[scale = 0.5]{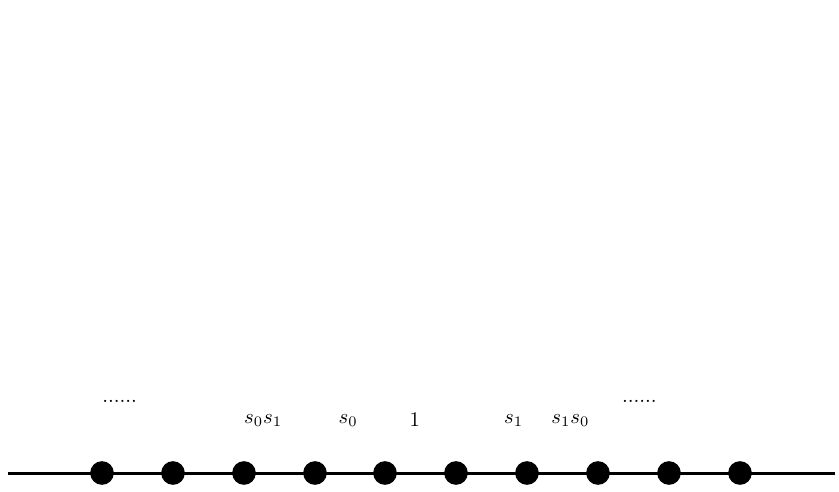}

\includegraphics[scale = 0.5]{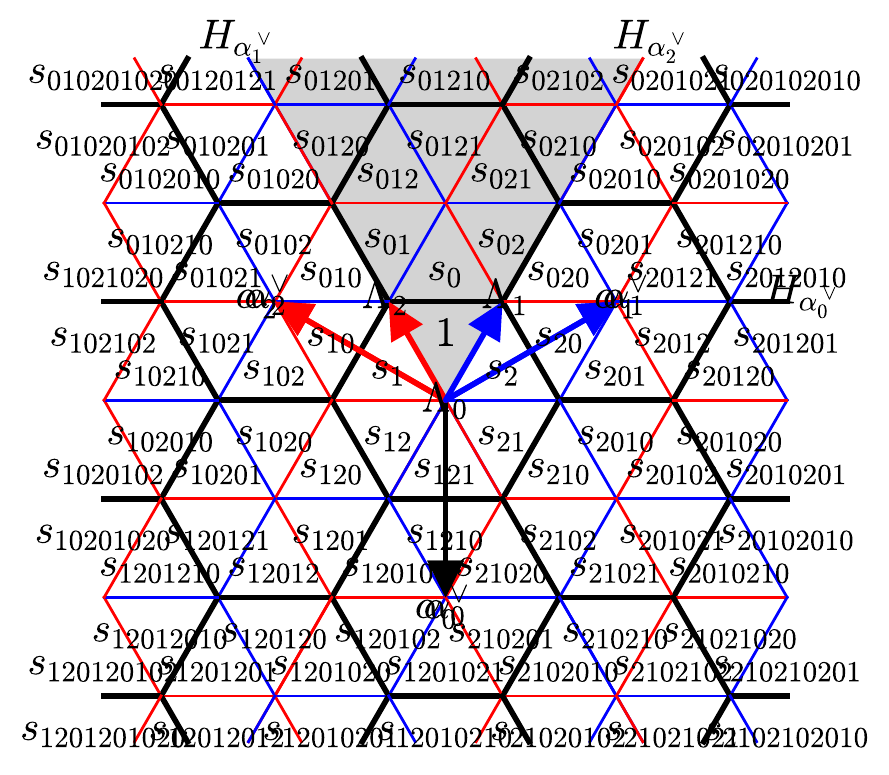}

As defined in section 3.2 of \cite{LP1}, the alcove lattice is the infinite graph whose vertices correspond to alcoves, and edges correspond to pairs of alcoves that are separated by a wall. It embeds in the weight lattice of $G$. 

\subsection{Moment polytopes}


The maximal torus $T$ in $G$ has a constant action on each fiber of the global affine flag variety $Fl_{\mathbb{A}^1}$. On the other hand, the rotation torus $\mathbb{C}^*$ scales the base curve $\mathbb{A}^1$. Therefore its action does not preserve individual fibers. We would like to define the $T-$equivariant moment map on the global affine flag variety. 

Let $\mathcal{L}$ be an ample line bundle on the global affine flag variety $Fl_C$, where $C$ is a curve. Let $\Gamma(Fl_C, \mathcal{L}^*)$ be the vector space of global sections of $\mathcal{L}^*$. Then $Fl_C$ embeds in the projective space $\mathbb{P}(V)$, where $V = \Gamma(Fl_C, \mathcal{L})^*$, by mapping $x \in Fl_C$ to the point determined by the line in $V$ dual to the hyperplane $\{ s \in \Gamma(Fl_C, \mathcal{L})| s(x) = 0\}.$ 

Let $T_{\mathbb{R}}$ denote the real compact form of $T$. The moment map $\Phi_{T_{\mathbb{R}}}$, for the action of the torus $T_{\mathbb{R}} \subset T \subset G$, is a map from $Fl_C \hookrightarrow \mathbb{P}(V)$ to $\mathfrak{t}^*_{\mathbb{R}}$. When we restrict $\Phi_{T_{\mathbb{R}}}$ to individual fibers in $Fl_C$, we get a map from $Gr \times G/B$ or $Fl$ to $\mathfrak{t}^*_{\mathbb{R}}$. There is an embedding of the weight lattice $X^*(T)$ to $\mathfrak{t}^*_{\mathbb{R}}$. Following the arguments in section 7 of \cite{Yun}, with appropriate choices, the moment map image of each $T-$fixed point in the fibers $Gr \times G/B$ or $Fl$ is a lattice point of the embedded weight lattice. The moment polytope of a $T-$invariant projective scheme is the convex hull of the images of its $T-$fixed points \cite{Bri2, GM}.  

The moment polytopes of $T-$invariant schemes in the global affine flag variety can be identified with certain convex polytopes in the alcove lattice. In particular, the moment map image of a $T-$fixed point indexed by $\gamma = (\lambda, w) \in W_{aff}$ in the affine flag variety lies in the interior of the alcove that corresponds to $\gamma \in W_{aff}$. Similarly, the moment map image of a $T-$fixed point indexed by $\gamma = (\lambda, w) \in X_*(T) \times W$ in the general fiber $Gr \times G/B$ lies in the interior of the alcove that corresponds to $(\lambda, w) \in X_*(T) \times W$. 

On the affine Grassmannian and the affine flag variety, there is also a $T-$equivariant moment map $\Phi_{T_{\mathbb{R}}}$ which is equal to the restriction of the one defined on the global affine flag variety to individual fibers of the central family. The $T-$equivariant moment polytope of the closure of a GGMS stratum $\overline{\cap_{w \in W} S_w^{\alpha_w}}$ in the affine Grassmanian or the affine flag variety is the convex hull of $\{ \alpha_w, w \in W\}$ in the alcove lattice. This is an example of a Pseudo-Weyl polytope \cite{Kam1}. In particular, the $T-$equivariant moment polytope of an MV cycle (the closure of a special GGMS stratum) is called an MV polytope \cite{And}. There is also a $T \times \mathbb{C}^*$ equivariant moment map $\Phi_{(T \times \mathbb{C}^*)_{\mathbb{R}}}$ on the affine Grassmannian and the affine flag variety. 

\begin{definition}
Given a $T-$invariant scheme $S$ with $T-$equivariant moment polytope $P$, a $T-$fixed point $p$ of $S$ is an extremal $T-$fixed point if its moment map image is a vertex of $P$. 

For each vertex $v$ of a convex polytope $P$ in the alcove lattice, we define the root number of $v$ in $P$, $n_{P_v}$, to be the number of lattice points $p'$ in $P$ which satisfy the following conditions: (1) $p'$ lies on the same lattice line as $v$; (2) $p'$ and $v$ can be connected by an extended torus invariant $\mathbb{P}^1$, or equivalently the closure of an affine root subgroup orbit. If $S$ is invariant under the extended torus $T \times \mathbb{C}^*$, then this is the number of $T \times \mathbb{C}^*$ invariant curves in $S$ which pass through the $T-$fixed point whose moment map image is $v$. 
\end{definition}

Below we explain some preliminary relations between the dimension of a $T \times \mathbb{C}^*$ invariant scheme in the affine Grassmannian or the affine flag variety and their moment polytopes. 

First, we recall an important result by Brion in \cite{Bri}, section 1.4. 

\begin{prop}[Brion]
\label{localdimprop}

Let $H$ be a torus acting on a variety $X$ with an isolated fixed point $x$, such that the number of closed irreducible $H-$stable curves through $x$ is finite; denote this number by $n(X, x)$. 
Let $\dim_x(X)$ denote the dimension of the tangent space $T_xX$ at $x$. Then $\dim_x(X) \leq n(X, x)$. 

\end{prop}

The corollary below applies to many interesting varieties in the affine Grassmannian and affine flag variety, like affine Schubert varieties, MV cycles, Iwahori MV cycles, generalized MV cycles, etc. 

\begin{corollary}

Let $X$ be an irreducible algebraic variety in the affine Grassmannian or the affine flag variety that is invariant under the action of the extended torus. Let $P$ be its $T \times \mathbb{C}^*$-equivariant moment polytope, $\Gamma$ be its moment graph for the extended torus, and $P'$ be its $T-$equivariant moment polytope. Then the dimension of $X$ is less than or equal to the number of edges incident at any vertex of $\Gamma$. Equivalently, the dimension of $X$ is less than or equal to the root number of any vertex of $P'$. 

\end{corollary}

\subsection{Torus equivariant flat families} 

By \cite{Har}, given a morphism $f: X \rightarrow Y$, where $X$ is an integral scheme, $Y$ is a nonsingular curve, and $f$ is surjective, then $f$ is a flat morphism. Since we will focus on families of MV cycles (which are irreducible and reduced \cite{MV}) in the global affine flag variety over $\mathbb{A}^1$, we are dealing with flat families in this paper. 

A family of schemes over $\mathbb{A}^1$ is called $T-$equivariant if $T$ acts on each fiber and its action commutes with the scaling action of $\mathbb{C}^*$ on $\mathbb{A}^1$. Then any flat family of $T-$invariant schemes in the global affine flag variety $Fl_{\mathbb{A}^1}$ is $T-$equivariant, as the action of $T$ is constant on all the fibers by \Cref{globalTcoro}. 

\begin{prop}

Consider a $T-$equivariant flat family of projective schemes, let $S$ be a $T-$invariant projective scheme in the general fiber, and let $\tilde{S}$ denote its limit in the special fiber. Then the $T-$equivariant moment polytope of $\tilde{S}$ coincides with that of $S$. 

\end{prop}

\begin{proof}

For a $T-$equivariant flat family of projective varieties, the multi-graded Hilbert polynomial is constant. Then the Duistermaat-Heckman measure on $\mathfrak{t}^*$, being the leading order behavior of the multi-graded Hilbert polynomial, also stays constant. The moment polytopes, which is the support of the Duistermaat-Heckman measure on $\mathfrak{t}^*$, is constant as well.  

For more details, see \cite{GLS, Har}. 

\end{proof}

\begin{corollary}
\label{Tfixeddegcoro}

Under the central degeneration, the special fiber limit of a $T-$fixed point in $Gr \cong Gr \times \{ e \}$ indexed by $\mu \in X_*(T)$ is the $T-$fixed point in $Fl$ indexed by the translation element $(\mu, e) \in W_{aff}$.
\end{corollary}

\begin{proof}
All distinct $T-$fixed points in every fiber of the global affine flag variety have distinct moment map images. Since this degeneration is $T-$equivariant, the special fiber limit of a $T-$fixed point in the general fiber must be the $T-$fixed in $Fl$ with the same moment map image. For the $T-$action on the global affine flag variety, the moment map images of the $T-$fixed point indexed by $\mu \in X_*(T)$ in $Gr \times \{ id \}$ and the $T-$fixed point indexed by $(\mu, e) \in W_{aff}$ in $Fl$ are the same. Therefore, the latter must be the special fiber limit of the former. 

\end{proof}

\begin{corollary}
\label{flatpolytopelemma}

The $T-$equivariant moment polytopes of a flat family of $T-$invariant schemes in the global affine flag variety stay the same. In other words, the moment polytope for the special fiber limit is the convex hull of the limits of the vertices of the moment polytope for the general fibers. 

\end{corollary}

There are two other related facts that we would like to recall. 

\begin{lemma}

Consider a flat family of schemes over a curve. Let $S_1$ and $S_2$ be two closed subschemes in the general fibers (which are isomorphic). Then the special fiber limit of the intersection of $S_1$ and $S_2$, $S_1 \cap S_2$, is contained in the intersection of the special fiber limit of $S_1$ and the special fiber limit of $S_2$. 

\end{lemma}

\begin{prop}
\label{flatdimlemma}

Consider a flat family of projective schemes in which the general fibers are irreducible of dimension $d$, then the special fiber is (set-theoretically) equidimensional of dimension $d$. 

\end{prop}

\begin{proof}

This follows from Chapter 3, Corollary 9.6 of \cite{Har}.

\end{proof}

\section{Degenerations of semi-infinite orbits and Levi Restriction}

Now let's consider the central degeneration of semi-infinite orbits, which are the orbits of $U_w = N_w(\mathcal{K})$. We start by discussing closure relations of semi-infinite orbits in the affine flag variety. Then we introduce some global group schemes, which act on the global affine flag variety and some flat families that we are interested in. Afterwards we show that the special fiber limit of the closure of a semi-infinite orbit in the affine Grassmannian is the closure of a semi-infinite orbit in the affine flag variety that is indexed by a translation element in the affine Weyl group. This suggests a more elementary proof of the existence of a weight functor for Gaitsgory's central sheaves as discussed in \cite{AB}. Finally, we consider the Levi restriction functor on the global affine flag variety, and show that central degeneration commutes with Levi restriction/parabolic retraction.

\subsection{Closure relations of semi-infinite orbits in the affine flag variety}

We describe the closure relations of semi-infinite orbits in the affine flag variety. In this subsection we will focus on the orbits of $U^-$, but closure relations of the orbits of other $U_w$ can be worked out in completely analogous ways. 

Let $G$ be a complex connected reductive algebraic group. Recall that in the affine Grassmannian for $G$, the $U^-$ orbit 
$S_{w_0}^{\mu'}$ is in the closure of $S_{w_0}^{\mu}$ if and only if $\mu' \leq \mu$ in $X_*(T)$. Similarly in the affine flag variety, closure relations between semi-infinite orbits are given by the semi-infinite Bruhat order, or Lusztig's generic order \cite{Lu}, which we describe below. 

\begin{prop}[Alcove Picture] 

When $G = SL_2(\mathbb{C})$, the semi-infinite Bruhat order is given below:

$$\cdots < s_1s_0s_1 < s_1s_0 < s_1 < e < s_0 < s_0s_1 < s_0s_1s_0 < \cdots.$$

For general $G$, given a pair $w, w' \in W_{aff}$, $w' \leq w$ if and only if $w'$ is contained in the closed cone with vertex $w$ generated by the negative coroots.

\end{prop}

\begin{example}
Let $G = SL_3(\mathbb{C})$. The green cone below is the moment map image of the closure of the $U^-$ orbit $S_{w_0}^{e}$ in the affine flag variety. A $U^-$-orbit $S_{w_0}^{\gamma}$ is in the closure of $S_{w_0}^{e}$ if and only if the lattice point corresponding to $\gamma$ in the alcove lattice lies in this cone. 

\includegraphics[scale = 0.5]{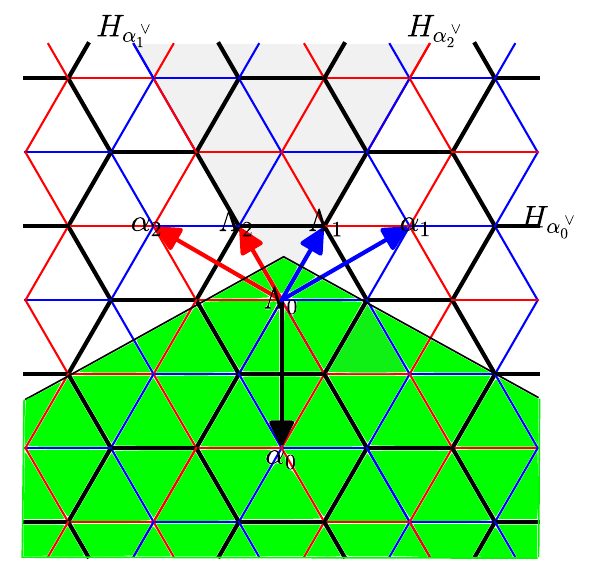}

\end{example}

Below we state an equivalent description of the periodic Bruhat order in the type A lattice picture. 

\begin{prop} [Lattice Picture for type A]

Let $G = GL_n(\mathbb{C})$ or $SL_n(\mathbb{C})$. Let $L_{\cdot} = (L_0 \supset L_1 \supset \cdots \supset L_{n-1} \supset t\cdot L_0)$ and 
$L'_{\cdot}= (L'_0 \supset L'_1 \supset \cdots \supset L'_{n-1} \supset t\cdot L'_0)$ be two sequences of coordinate lattices which correspond to two 
$T-$fixed points in the affine flag variety. Let $\gamma$ and $\gamma'$ denote the affine Weyl group elements indexed by $L_{\cdot}$ and $L'_{\cdot}$ respectively. Similarly, let $\eta_i$ and $\eta'_i$ denote the coweights for $GL_n(\mathbb{C})$ corresponding to the lattices $L_i$ and $L'_i$  $\forall i = 0, ..., n - 1$. Then $\gamma \leq \gamma'$ if and only if $\eta_i \leq \eta'_i$ as coweights $\forall i = 0, ..., n - 1$.

\end{prop} 

In this paper there are two relevant ordering on elements of $W_{aff}$. For the relation ``less than or equal to", we denote $\leq_{B}$ for the usual Bruhat ordering (the closure relations for Iwahori orbits), and $\leq_{U_w}$ for the periodic Bruhat ordering that corresponds to the closure relations for $U_w$ orbits. The same notations apply to the other ordering relations.

\subsection{Global group schemes that act on the Central Family}

Earlier on we introduced various global group schemes that act on the global affine flag variety. In this subsection, we will focus on their global subgroups which act on the central family $F_{\text{cen}}$. These are special cases of the related constructions in \cite{Zhu}. 

\begin{prop}
\label{globalgrouplemma}

Let $G = GL_n(\mathbb{C})$ or any matrix Lie group over $\mathbb{C}$. The global group scheme $G(\mathcal{K})_{\mathbb{A}^1}$ has a global subgroup $G(\mathcal{K})'_{\mathbb{A}^1}$ that acts on the central family $F_{\text{cen}}$. A matrix $M$ in 
$G(\mathcal{K})_{\mathbb{A}^1}$ with $\mathbb{C}[(t - \epsilon), (t - \epsilon)^{-1}], \epsilon \in \mathbb{A}^1$ coefficients is an element of $G(\mathcal{K})'_{\mathbb{A}^1}$ if and only if it satisfies the following condition: every entry in $M$ that is below the diagonal is of the form $t \cdot l, l \in \mathbb{C}[(t - \epsilon), (t - \epsilon)^{-1}]$. 

\end{prop}

\begin{proof}

Since the global subgroup $G(\mathcal{K})'_{\mathbb{A}^1}$ of $G(\mathcal{K})_{\mathbb{A}^1}$ acts on $F_{\text{cen}}$, it is necessary and sufficient that when $\epsilon \neq 0$, the matrices in $G(\mathcal{K})'_{\mathbb{A}^1}$ with 
$\mathbb{C}[(t - \epsilon), (t - \epsilon)^{-1}]$ coefficients preserve the standard $B-$reduction of a principal $G-$bundle at $0$. This means that entries below the diagonal of these matrices are all $0$ when we set $t = 0$. When $\epsilon = 0$, the matrices in $G(\mathcal{K})'_{\mathbb{A}^1}$ with $\mathbb{C}[t, t^{-1}] $ coefficients has to be the limit of the general fibers of $G(\mathcal{K})'_{\mathbb{A}^1}$ by letting $\epsilon \rightarrow 0$, in order to satisfy the gluing conditions of a global scheme. 

\end{proof}

\begin{theorem}
\label{smallpiecetheorem}

Let $G = GL_n(\mathbb{C})$ or any matrix Lie group over $\mathbb{C}$. The global group scheme $U_{\alpha_0, m}(\mathcal{K})_{\mathbb{A}^1}$ has a global subgroup $U_{\alpha_0, m}(\mathcal{K})'_{\mathbb{A}^1}$ which acts on the central family $F_{\text{cen}}$.

Case 1: $\alpha_0$ is a positive root. For any $\epsilon \in \mathbb{A}^1$, the restriction of $U_{\alpha_0, m}(\mathcal{K})'_{\mathbb{A}^1}$ to the fiber above $\epsilon$, $U_{\alpha_0, m}(\mathcal{K})'_{\mathbb{A}^1}|_{\{ \epsilon \}}$, is equal to $U_{\alpha_0, m}(\mathbb{C}[(t - \epsilon), (t - \epsilon)^{-1}])$. Their actions on individual fibers of the central family coincide. 

Case 2: $\alpha_0$ is a negative root. When $\epsilon \neq 0$, $U_{\alpha_0, m}(\mathcal{K})'_{\mathbb{A}^1}|_{\{ \epsilon \}} \subset U_{\alpha_0, m}(\mathbb{C}[(t - \epsilon), (t - \epsilon)^{-1}])$. For any $T-$fixed point $t^{\lambda}, \lambda \in X_*(T)$, in the general fiber of $F_{\text{cen}}$, the two orbit closures below coincide: $$\overline{U_{\alpha_0, m}(\mathcal{K})'_{\mathbb{A}^1}|_{\{ \epsilon \}} \cdot t^{\lambda} }= \overline{U_{\alpha_0, m}(\mathbb{C}[(t - \epsilon), (t - \epsilon)^{-1}]) \cdot t^{\lambda}}.$$ 
When $\epsilon = 0$, $U_{\alpha_0, m}(\mathcal{K})'_{\mathbb{A}^1}|_{\{ \epsilon \}} =  U_{\alpha_0, m+1}(\mathbb{C}[t, t^{-1}])$. 
The actions of these two groups on the special fiber coincide.

\end{theorem}

\begin{proof}

The existence and matrix description of $U_{\alpha_0, m}(\mathcal{K})'_{\mathbb{A}^1}$ follow from the arguments in the proof of \Cref{globalgrouplemma}. 

If $\alpha_0$ is a positive root, the only entry of any matrix element in $U_{\alpha_0, m}(\mathcal{K})'_{\mathbb{A}^1}$ lies above the diagonal. Therefore it is the same as $U_{\alpha_0, m}(\mathcal{K})_{\mathbb{A}^1}$.

If $\alpha_0$ is a negative root, the only entry of any element in $U_{\alpha_0, m}(\mathcal{K})'_{\mathbb{A}^1}$ lies below the diagonal and has the form $t \cdot l$, where $l \in \mathbb{C}[(t - \epsilon), (t - \epsilon)^{-1}]$ and $val(l) \geq m$. 
When $\epsilon = 0$, every entry of the form $t \cdot (a(t - \epsilon)^m + ...)$ becomes $at^{m + 1} + ...$. Therefore, $U_{\alpha_0, m}(\mathcal{K})'_{\mathbb{A}^1}|_{\{ 0 \}}$ coincide with $U_{\alpha_0, m+1}(\mathbb{C}[t, t^{-1}])$. 
When $\epsilon \neq 0$, $U_{\alpha_0, m}(\mathcal{K})'_{\mathbb{A}^1}|_{\{ \epsilon \}} \subset U_{\alpha_0, m}(\mathbb{C}[(t - \epsilon), (t - \epsilon)^{-1}])$ as $t \cdot l = \epsilon \cdot l + (t - \epsilon) \cdot l$. 
Given a $T-$fixed point $t^{\lambda}$ in the general fiber, there exists an integer $v$ such that $U_{\alpha_0, v}(\mathbb{C}[(t - \epsilon), (t - \epsilon)^{-1}])$ belongs to the stabilizer of $t^{\lambda}$ but 
$U_{\alpha_0, v - 1}(\mathbb{C}[(t - \epsilon), (t - \epsilon)^{-1}])$ doesn't. If $m \geq v$, 
then $U_{\alpha_0, m}(\mathcal{K})'_{\mathbb{A}^1}|_{\{ \epsilon \}} \cdot t^{\lambda} = U_{\alpha_0, m}(\mathbb{C}[(t - \epsilon), (t - \epsilon)^{-1}]) \cdot t^{\lambda} = t^{\lambda}$. 
If $m < v$, the orbit of the set of matrix elements with entries of the form $c_m(t - \epsilon)^m + \cdots + c_{v-1}(t - \epsilon)^{v-1}, c_i \neq 0, m \leq i \leq v-1$ is dense in $U_{\alpha_0, m}(\mathbb{C}[(t - \epsilon), (t - \epsilon)^{-1}]) \cdot t^{\lambda}$. Consider any matrix $M_f \in U_{\alpha_0, m}(\mathbb{C}[(t - \epsilon), (t - \epsilon)^{-1}])$ in the aforementioned set with the only entry being $f(t) = a_m(t - \epsilon)^m + \cdots + a_{v-1}(t - \epsilon)^{v-1}$. 
Let $M_g \in U_{\alpha_0, m}(\mathcal{K})'_{\mathbb{A}^1}|_{\{ \epsilon \}}$ be the matrix whose only entry is $g(t) = t(b_m(t - \epsilon)^m + \cdots + b_{v-1}(t - \epsilon)^{v-1}) \in U_{\alpha_0, m}(\mathcal{K})'_{\mathbb{A}^1}|_{\{ \epsilon \}}$ such that $b_m, ..., b_{v-1}$ satisfy the finite collection of linear equations $\{ \epsilon b_m = a_m, b_{m} + \epsilon b_{m+1} = a_{m + 1}, ..., b_{v-2} + \epsilon b_{v-1} = a_{v-1} \}$. Then $g(t) = a_m(t - \epsilon)^m + \cdots + a_{v-1}(t - \epsilon)^{v-1} + b_{v-1}(t - \epsilon)^v$. 
Therefore $M_f \cdot t^{\lambda} = M_g \cdot t^{\lambda}$. Since set of orbits of the elements of the form $M_f$ is dense in $U_{\alpha_0, m}(\mathbb{C}[(t - \epsilon), (t - \epsilon)^{-1}]) \cdot t^{\lambda}$, we conclude that 
$\overline{U_{\alpha_0, m}(\mathcal{K})'_{\mathbb{A}^1}|_{\{ \epsilon \}} \cdot t^{\lambda} }= \overline{U_{\alpha_0, m}(\mathbb{C}[(t - \epsilon), (t - \epsilon)^{-1}]) \cdot t^{\lambda}}$.

\end{proof}

\begin{corollary}
\label{rootdegcoro}

Let $G = GL_n(\mathbb{C})$ or any matrix Lie group over $\mathbb{C}$. The global group scheme $U_{\alpha_0}(\mathcal{K})_{\mathbb{A}^1}$ has a global subgroup $U_{\alpha_0}(\mathcal{K})'_{\mathbb{A}^1}$ which acts on $F_{\text{cen}}$.

If $\alpha_0$ is a positive root, $U_{\alpha_0}(\mathcal{K})'_{\mathbb{A}^1}|_{\{ \epsilon \}} = U_{\alpha_0}(\mathbb{C}[(t - \epsilon), (t - \epsilon)^{-1}])$ for all $\epsilon \in \mathbb{A}^1$, and their actions on individual fibers of $Fl_{\mathbb{A}^1}$ coincide. 

If $\alpha_0$ is a negative root, for any $\epsilon \in \mathbb{A}^1$, $U_{\alpha_0}(\mathcal{K})'_{\mathbb{A}^1}|_{\{ \epsilon \}} \subset U_{\alpha_0}(\mathbb{C}[(t - \epsilon), (t - \epsilon)^{-1}])$. For any $T-$fixed point $p$ in the $\epsilon-$fiber of $F_{\text{cen}}$, the closure of $U_{\alpha_0}(\mathcal{K})'_{\mathbb{A}^1}|_{\{ \epsilon \}} \cdot p$ coincides with the closure of 
$U_{\alpha_0}(\mathbb{C}[(t - \epsilon), (t - \epsilon)^{-1}]) \cdot p$.

\end{corollary}

\begin{proof}

The global group scheme $U_{\alpha_0}(\mathcal{K})'_{\mathbb{A}^1}$ is the limit of $U_{\alpha_0, m}(\mathcal{K})'_{\mathbb{A}^1}$ as $m \rightarrow - \infty$. Then the claim follows from \Cref{smallpiecetheorem}. 

\end{proof}

\begin{corollary}
\label{Nwdegcoro}

Let $G = GL_n(\mathbb{C})$ or any matrix Lie group over $\mathbb{C}$. For any $w \in W$, the global group scheme $N_w(\mathcal{K})_{\mathbb{A}^1}$ has a subgroup $N_w(\mathcal{K})'_{\mathbb{A}^1}$ which acts on $F_{\text{cen}}$, and they agree if and only if $w$ is the identity element in $W$. 

Given $\epsilon \in \mathbb{A}^1, w \in W$ and any $T-$fixed point $p$ in the $\epsilon-$fiber of $F_{\text{cen}}$, the closure of $N_w(\mathcal{K})'_{\mathbb{A}^1}|_{\{ \epsilon \}} \cdot p$ is isomorphic to the closure of $N_w(\mathcal{K}) \cdot p$.

\end{corollary}

\begin{proof}

For each $w \in W$, $N_w(\mathcal{K})_{\mathbb{A}^1}$ is a finite product of $U_{\alpha_0}(\mathcal{K})_{\mathbb{A}^1}$ for some finite roots $\alpha_0$. Then the claim immediately follows from \Cref{rootdegcoro}.

\end{proof}

Finally, we are ready to state a theorem about the degeneration of semi-infinite orbits. 

\begin{theorem}
\label{semiinfinitethrm}

Let $G = GL_n(\mathbb{C})$ or any matrix Lie group over $\mathbb{C}$. Given the closure of any $N_w(\mathcal{K})$ orbit, $\overline{S^{\mu}_w}, \mu \in X_*(T)$, in the affine Grassmannian, its special fiber limit is the closure of the corresponding $N_w(\mathcal{K})$ orbit, $\overline{S_w^{(\mu, e)}}, (\mu, e) \in W_{aff}$, in the affine flag variety.

\end{theorem}

\begin{proof}

By \Cref{Tfixeddegcoro}, the $T-$fixed point $t^{\mu}, \mu \in X_*(T)$ in the general fibers $Gr \times \{ e\}$ degenerate to the $T-$fixed point in the special fiber $Fl$ indexed by the translation element $(\mu, e) \in W_{aff}$. Then the closure of the semi-infinite orbit in the affine Grassmannian $\overline{S_w^{\mu}} = \overline{N_w(\mathcal{K})'_{\mathbb{A}^1}|_{\{\epsilon \neq 0\}} \cdot t^{\mu}}$ degenerates to the closure of the semi-infinite orbit in the affine flag variety 
$\overline{S_w^{(\mu, e)}} = \overline{N_w(\mathcal{K})'_{\mathbb{A}^1}|_{\{0 \}} \cdot t^{(\mu, e)}}$ by \Cref{{Nwdegcoro}}. 
 
\end{proof}

\begin{remark}
\Cref{semiinfinitethrm} above shows that the special fiber limit of a semi-infinite orbit in the affine Grassmannian has to be a semi-infinite orbit in the affine flag variety which corresponds to a translation element in the affine Weyl group. 
\end{remark}

Now we discuss an application of \Cref{semiinfinitethrm} above. 

In \cite{AB} Theorem 4, the authors connect Gaitsgory's central sheaves with geometric Satake by constructing a weight functor on the central sheaves and proving the following commutative diagram.

\begin{theorem}[Arkhipov-Bezrukavnikov]
\label{wtfunctorthrm}

Let $\Phi_1$ denote the nearby cycles functor for Gaitsgory's central sheaves: $\Phi_1: \mathcal{P}_{G(\mathcal{O})}(Gr_G) \rightarrow \mathcal{Z}(\mathcal{P}_I(Fl_G))$. 

Let $\Phi_2$ denote the hyperbolic localization functor $H^*_c(S_e^{\mu}, \mathcal{F}), \mu \in X_*(T)$ from $\mathcal{P}_{G(\mathcal{O})}(Gr_G)$ to $\mathcal{P}_{T(\mathcal{O})}(Gr_T)$, which is equivalent to the tensor category of $T^{\vee}$ representations $Rep(T^{\vee})$. 

Let $\Phi_3$ denote the hyperbolic localization functor on $\mathcal{P}_I(Fl_G)$ given by 
$H^*_c(S_e^{\gamma}, \mathcal{F}), \gamma \in W_{aff}, \mathcal{F} \in \mathcal{P}_I(Fl_G)$. 

The following diagram commutes:

\xymatrixrowsep{10mm} 
\xymatrixcolsep{2pc}
\xymatrix{
\mathcal{P}_{G(\mathcal{O})}(Gr_G) \ar[rr]^{\Phi_1} \ar[rd]^{\Phi_2} & & \mathcal{P}_I(Fl_G)) \ar[ld]^{\Phi_3} \\
 & \mathcal{P}_{T(\mathcal{O})}(Gr_T) \cong Rep(T^{\vee})
}

\end{theorem}

Roughly speaking, our \Cref{{Nwdegcoro}} and \Cref{semiinfinitethrm} indicates that any nearby fiber of a semi-infinite orbit $S_e^{\gamma} \subset Fl_G$ for a non-translation element $\gamma \in W_{aff}$ admits a free $T-$action, whereas the nearby fiber of a semi-infinite orbit $S_e^{\gamma} \subset Fl_G$ for a translation element $\gamma = (\mu, e)$ contains a unique $T-$fixed point indexed by $\mu \in X_*(T)$. From the definition of cohomology with compact support, this suggests a more elementary proof of \Cref{wtfunctorthrm}.

\subsection{Levi restriction}

Note that the functors $\Phi_2$ and $\Phi_3$ in \Cref{wtfunctorthrm} are hyperbolic localization functors, and they involve an appropriate choice of $\mathbb{C}^* \rightarrow T$ which retracts each semi-infinite orbit in the affine Grassmannian and affine flag variety of $G$ to a unique $T-$fixed point, as we discussed in 2.1.3. In general, given any cocharacter $\mathbb{C}^* \rightarrow T$, there is a Bialynicki-Birula decomposition for the affine Grassmannian and affine flag variety, where the retracting cells are some semi-infinite orbits, and the fixed subscheme is isomorphic to the affine Grassmannian for a Levi subgroup. This is called parabolic retraction \cite{BaGau} or Levi restriction \cite{BD}, and has interesting relations to representation theory. In this subsection, we introduce Levi restriction for the affine Grassmannian, as developed in \cite{BD}, and extend it to the affine flag variety in type A. Then we explain the relationship between Levi restriction and central degeneration. Majority of our notations are adopted from those used in \cite{Kam2}.

Fix $w \in W$. To study the retraction where the attracting cells are $N_w(\mathcal{K})$ orbits for some $\mathbb{C}^* \hookrightarrow T$ action, we consider parabolic subgroups which contain the Borel $B_w = wBw^{-1}$. 

Let $P^J \subseteq B_w$ be a parabolic subgroup. Here $J$ is the subset of the set of vertices of the Dynkin diagram such that the Lie algebra of $P^J$ consists of all the root spaces 
$\mathfrak{g}_{\alpha}$ with $\alpha \in w \cdot \Delta^+ \cup \Delta_J$, where $\Delta_J \subset \Delta$ is the root subsystem generated by the simple roots $\{ \alpha_j: j \in J \}$. Let $N^J$ be its unipotent radical. Let $G_J = P^J/N^J$ be the corresponding Levi factor. Then the quotient map $q: P^J \rightarrow G_J$ has a canonical splitting $s$. This naturally induces a map $q: Gr_{P^J} \rightarrow Gr_{G_J}$ with canonical splitting $s$. 

The image of $B_w$ under the quotient $q$ is denoted by $B_J$ and its unipotent radical is denoted $N_J$. Under the canonical splitting $s$,  the image of $N_J$ is the unipotent subgroup of $G$ (also denoted $N_J$) whose Lie algebra contains all the weight spaces $\mathfrak{g}_{\alpha}$ for $\alpha \in \Delta_J \cap w \cdot \Delta^+$. Let $W_J$ denote the Weyl group of $G_J$. Under $s$, it is identified with the subgroup of $W$ generated by $\{ s_j: j \in J\}$. Let $v \in W$ denote the longest element of $W_J$. For $u \in W_{J}$ we have the unipotent subgroup $N_{u, J} = uN_Ju^{-1}$ of $G_J$. Similarly, its image under the canonical splitting is the unipotent subgroup of $G$ whose Lie algebra contains all the weight spaces 
$\mathfrak{g}_{\alpha}$ for $\alpha \in \Delta_J \cap wu \cdot \Delta^+$. For $u \in W_J$ and $\mu \in X_*(T)$, let $S_{u, J}^{\mu}$ denote the corresponding semi-infinite orbit in $Gr_{G_J}$. 

The inclusion $P^J \hookrightarrow G$ induces a morphism $i: Gr_{P^J} \rightarrow Gr_G$ which is bijective, but its inverse is not continuous in general \cite{BD}. However, it is a homeomorphism when restricted to certain semi-infinite orbits. 
Given $\lambda \in X_*(T), u \in W_J$ and any $g \in N_{wu}(\mathcal{K})$, the map $i^{-1}$ on $S_{wu}^{\lambda}$ is given by 
$i^{-1}(g \cdot t^{\lambda}) = g \cdot t^{\lambda} \in Gr_{P^J}$. To see that this is indeed well-defined, consider $g_1, g_2 \in N_{wu}(\mathcal{K})$ such that $g_1 \cdot t^{\lambda}$ and $g_2 \cdot t^{\lambda}$ are the same point in $Gr_G$. Then there exists $h \in G(\mathcal{O})$ such that $g_1 \cdot t^{\lambda} = g_2 \cdot t^{\lambda} \cdot h$. Since the left hand side of the equation belongs to $P^J(\mathcal{K})$, $h \in P^J(\mathcal{K}) \cap G(\mathcal{O}) = P^J(\mathcal{O})$. Therefore, $g_1 \cdot t^{\lambda}$ and $g_2 \cdot t^{\lambda}$ are the same point in $Gr_{P^J}$. Thus, $i$ is a homeomorphism on the semi-infinite orbits $S_{wu}^{\lambda}, u \in W_J, \lambda \in X_*(T)$ of $Gr_G$ and $Gr_{P^J}$. 

We have the restriction morphism $r_P^{\lambda, u}: S_{wu}^{\lambda} \rightarrow S_{u, J}^{\lambda}$, which is defined to be the composition $i^{-1} \circ q$. It can be identified with a Bialynicki-Birula retraction map on the affine Grassmannian $Gr_G$. We can extend the map $r_P$ to semi-infinite orbits on the affine flag variety, as semi-infinite orbits in the affine flag variety undergo Bialynicki-Birula retractions as well. Here we present a construction in type A. 

Each point of the affine flag variety of $GL_n(\mathbb{C})$ can be represented as a sequence of lattices $L_{\cdot} = (L_0 \supset L_1 \supset \cdots \supset L_{n-1} \supset t\cdot L_0)$. Given a semi-infinite orbit $S_w^{\gamma}, \gamma \in W_{aff}$, it can be represented as $N_w(\mathcal{K}) \cdot L_{\cdot}$, where $L_{\cdot}$ is a sequence of coordinate lattices. Each lattice $L_i$ corresponds to a coweight $\mu_i$ and represents a $T-$fixed point on one connected component of the affine Grassmannian for $GL_n$. 

Given $\gamma \in W_{aff}, u \in W_J$, let $L_{\cdot}$ be the sequence of coordinate lattices representing $\gamma$ in a connected component of $Fl_{GL_n(\mathbb{C})}$. We have the restriction map $r_P^{\gamma, u}$ on $S_{wu}^{\gamma} \subset Fl_G$, which on the level of $\mathbb{C}$ points is given by $r_P^{\gamma, u}(g \cdot L_{\cdot}) = q(g) \cdot L_{\cdot}$ for any $g \in N_{wu}(\mathcal{K})$. Here the group elements acts on $L_{\cdot}$ by acting on individual lattices simultaneously. Let $L'_{\cdot}$ be a sequence of lattices in $S_w^{\gamma}$, then $r_P(L'_{\cdot}) = (r_P(L'_0) \supseteq r_P(L'_1) \supseteq \cdots \supseteq r_P(L'_{n-1}) \supseteq t\cdot r_P(L_0))$. Note that some inclusions in the sequence of projected lattices $r_P(L'_{\cdot})$ must be strict, and this new sequence of lattices is isomorphic to a point in $Fl_{G_J} = G_J(\mathcal{K})/(I \cap G_J(\mathcal{O}))$. 

In the following theorem, we explain Levi restriction for the global affine flag variety. For some related sheaf-theoretic results, see \cite{HaRi}.

\begin{theorem}
\label{restrideg}

Let $S_w^{\mu}, w \in W$ be a semi-infinite orbit in the affine Grassmannian of type A. Let $P^J \supseteq B_w$ be a parabolic subgroup with Levi factor $G_J$. For any $u$ in the Weyl group $W_J$ of $G_J$, the following digram commutes. In other words, central degeneration of the closures of semi-infinite orbits commutes with Levi restriction/parabolic retraction. 

\xymatrixrowsep{10mm} 
\xymatrixcolsep{2pc}
\xymatrix{
\overline{S_{wu}^{\mu}} \subset Gr_G \ar[rr]^{deg} \ar[d]^{r_P^{\mu, u}} & & \overline{S_{wu}^{(\mu, e)}} \subset Fl_G \ar[d]^{r_P^{(\mu, e), u}} \\
 \overline{S_{u, J}^{\mu}} \subset Gr_{G_J} \ar[rr]^{deg} & & \overline{S_{u, J}^{(\mu, e)}} \subset Fl_{G_J}
}

\end{theorem}
 
\begin{proof}
 
The maps $r_P^{\mu, u}$ and $r_P^{(\mu, e), u}$ have a canonical splitting $s' = i^{-1} \circ s$. Then $s'(S_{u, J}^{\mu}) =  N_{u, J} \cdot \mu \subseteq S_{wu}^{\mu}$ and $s'(S_{u, J}^{(\mu}, e)) =  N_{u, J} \cdot (\mu, e) \subseteq S_{wu}^{(\mu, e)}$. By \Cref{rootdegcoro} and the arguments used in the proof of \Cref{semiinfinitethrm}, the special fiber limit of the closure of the orbit $\overline{N_{u, J} \cdot (\mu, e)} \subseteq \overline{S_{wu}^{(\mu, e)}}$ is the closure of the corresponding orbit 
$\overline{N_{u, J} \cdot (\mu, e)} \subseteq \overline{S_{wu}^{(\mu, e)}}$. Therefore, the diagram in the claim commutes. 

\end{proof}

\section{Degenerations of Mirkovi$\acute{\text{c}}$-Vilonen cycles}

In this section, we explain the central degeneration of MV cycles and the corresponding transformations of MV polytopes. 

First of all, our knowledge of the degenerations of the closures of $G(\mathcal{O})$ orbits and semi-infinite orbits immediately gives us some useful ideas about the special fiber limits of MV cycles.

\begin{prop}
\label{basicMVlemma}

Let $S$ be an MV cycle in $Gr^{\lambda} \cap S_{w_0}^{\mu}$ for some $\lambda \in X_*(T)^+, \mu \in X_*(T)$, and $\tilde{S}$ be its special fiber limit. Let $\mu_w, w \in W$ be such that $S = \overline{\cap_{w \in W} S^{\mu_w}_w}$. Then $\tilde{S}$ is contained in the union of non-empty intersections of Iwahori orbits and $U^-$ orbits $I^{w_1} \cap S_{w_0}^{w_2}$ in the affine flag variety, such that $w_1, w_2 \in W_{aff}$ are both $\lambda-$admissible, and $w_2 \leq_{U^-} (\mu, e)$. 

Moreover, $\tilde{S}$ is contained in the intersection of the closures of semi-infinite orbits in the affine flag variety, $\cap_{w \in W} \overline{S_w^{(\mu_w, e)}}$, which is a finite union of GGMS strata. Each irreducible component $S'$ of $\tilde{S}$ is contained in the closure of a GGMS stratum $S_1$ in $\cap_{w \in W} \overline{S_w^{(\mu_w, e)}}$, such that $S'$ and $S_1$ have the same moment polytope. 

\end{prop}

\begin{proof}

These claims immediately follow from \cite{Zhu}, \Cref{semiinfinitethrm} and \Cref{GGMSirreprop}.

\end{proof}

\subsection{$SL_2(\mathbb{C})$ case}

In this subsection, we aim to give an explicit description of the special fiber limits of MV cycles in the affine Grassmannian for $SL_2(\mathbb{C})$, and compare that with generalized MV cycles and GGMS strata in the affine flag variety for 
$SL_2(\mathbb{C})$. First, we recall and prove some basic properties of GGMS strata and generalized MV cycles in the affine flag variety, and then we prove the main theorem for this subsection and illustrate it through an example. 

When $G = SL_2(\mathbb{C})$, MV cycles are a bit simpler than the general case. In this case, each MV cycle is the closure of the intersection $Gr^{\lambda} \cap S_{w_0}^{\mu}$ or the GGMS stratum $\overline{S_{e}^{- \lambda} \cap S_{w_0}^{\mu}}$ for some $\lambda \in X_*(T)^+, \mu \in X_*(T)$. This corresponds to the fact that in each finite dimensional irreducible representation of $PGL_2(\mathbb{C})$, the multiplicity of each weight is one.

We are going to describe the GGMS strata and generalized MV cycles for $G = SL_2(\mathbb{C})$. The proposition below shows that lots of GGMS strata in the affine flag variety are empty. 

\begin{prop}
\label{semiinfiniteemptylemma}
In the affine flag variety for $SL_2(\mathbb{C})$, a GGMS stratum involving two semi-infinite orbits indexed by distinct translation elements $S_e^{(\mu, e)} \cap S_{w_0}^{(\mu', e)}, \mu \neq \mu'$ is empty. The same is true if the two semi-infinite orbits are indexed by distinct non-translation elements $S_e^{(\mu, w_0)} \cap S_{w_0}^{(\mu', w_0)}, \mu \neq \mu'$. 
\end{prop}

\begin{proof}

Suppose there exist distinct translation elements $(\mu, e), (\mu', e) \in W_{aff}$ such that the intersection $S_e^{(\mu, e)} \cap S_{w_0}^{(\mu', e)}$ is nonempty, then it is necessary that $\mu' \geq \mu$. Otherwise the intersections of the moment map images of $S_e^{(\mu, e)}$ and $S_{w_0}^{(\mu', e)}$ would be empty. 

We represent $(\mu, e)$ and $(\mu', e)$ as sequences of coordinate lattices $L_0 \supset L_1$ and $L'_0 \supset L'_1$. Then $L_0$ and $L'_0$  are indexed by $\mu, \mu' \in X_*(T)$. Similarly, $L_1$ and $L'_1$ are indexed by two coweights 
$\gamma, \gamma'$ for $GL_2(\mathbb{C})$. The loop group $G(\mathcal{K})$ and its various subgroups act on the space of sequences of lattices by acting on different lattices simultaneously. 

Now consider this intersection in terms of sequences of lattices. In the closure of the intersection $\overline{U\cdot L_0 \cap U_{w_0} \cdot L_0'}$ there is a $\mathbb{P}^1$ which is the closure of an affine root subgroup orbit $U_{\eta} \cdot L_0'$. The affine root subgroup $U_{\eta}$ also acts on $L_1'$ and its orbit closure connects $L_1'$ to the lattice indexed by the coweight $\gamma' - (\mu' - \mu)$. However, $\gamma' - (\mu' - \mu) \neq \gamma$. 
In fact, the closure of the orbit $U_{\eta} \cdot (\mu', e)$ connects $(\mu', e)$ to $(\mu, w_0)$. The $T-$equivariant moment map image of the orbit closure $\overline{U_{\eta} \cdot (\mu', e)}$ is strictly bigger than the moment polytope of 
$\overline{S_e^{(\mu, e)} \cap S_{w_0}^{(\mu', e)}}$. We have arrived at a contradiction. Therefore, the intersection $S_e^{(\mu, e)} \cap S_{w_0}^{(\mu', e)}$ must be empty. 

The proof for the case of two non-translation elements is completely analogous. 

\end{proof}

Now we have a necessary and sufficient condition for non-emptiness of GGMS strata in the affine flag variety for $SL_2(\mathbb{C})$. 

\begin{corollary}
\label{SL2GGMS}

In the affine flag variety for $SL_2(\mathbb{C})$, a GGMS stratum $S_e^{\gamma_1} \cap S_{w_0}^{\gamma_2}, \gamma_1, \gamma_2 \in W_{aff}$ is nonempty if and only if $\gamma_2 \geq_{U^-} \gamma_1$  and they are either equal or can be connected by an extended torus invariant $\mathbb{P}^1$. 

\end{corollary}

\begin{proof}

On one hand, suppose $\gamma_2 \geq_{U^-} \gamma_1$  and they are either equal or can be connected by an extended torus invariant $\mathbb{P}^1$. Let $p_1$ and $p_2$ denote the $T-$fixed point(s) indexed by $\gamma_1$ and $\gamma_2$ respectively. 
If $\gamma_1 = \gamma_2$, the intersection of these two semi-infinite orbits is just the $T-$fixed point $p_1 = p_2$. If $\gamma_2 >_{U^-} \gamma_1$, then an open dense subset of the extended torus invariant $\mathbb{P}^1$ that connects $p_1$ and $p_2$ is contained in the GGMS stratum $S_e^{\gamma_1} \cap S_{w_0}^{\gamma_2}$. 

On the other hand, suppose a GGMS stratum $S_e^{\gamma_1} \cap S_{w_0}^{\gamma_2}, \gamma_1, \gamma_2 \in W_{aff}$ is nonempty. Then $\gamma_2 \geq \gamma_1$. By \Cref{semiinfiniteemptylemma} above, it is necessary that $\gamma_1 = \gamma_2$ or $\gamma_1$ and $\gamma_2$ are not both translation or both non-translation elements, which means that they can be connected by the closure of an affine root subgroup orbit by 
\Cref{rootfixedptsprop}. 

\end{proof}

In \cite{PRS}, there is an explicit description of the points in a generalized MV cycle via certain labeled alcove walks. These walks have a particularly simple form in the case of $SL_2(\mathbb{C})$, and some basic properties of generalized MV cycles in this special case follow from that. 

\begin{prop}
\label{SL2alcovewalks}

Let $G = SL_2(\mathbb{C})$. We define the positive direction on the set of alcoves and affine Weyl group elements to be the increasing direction for the $U^-$ periodic Bruhat order. 

Let $\vec{\gamma_1}$ denote the direct walk from $e$ to $\gamma_1$ across alcoves. We define a labeled folded path $p$ of type $\vec{\gamma_1}$ which ends in $\gamma_2$ as follows. If $\gamma_1 \geq_{U^-} e$, then $\vec{\gamma_1}$ is a sequence of $l(\gamma_1)$ many positive steps. Each labeled folded path $p$ is equal to $\vec{\gamma_1}$ as a walk. If $\gamma_1 \leq_{U^-} e$, then $\vec{\gamma_1}$ is a sequence of $l(\gamma_1)$ many negative steps. The walk $p$ consists of a direct negative walk from $e$ to a unique alcove $\gamma_3$ such that $e \geq_{U^-} \gamma_3 \geq_{U^-} \gamma_1$, followed by a positive fold at the hyperplane next to $\gamma_3$ in the negative direction, and a direct positive walk from $\gamma_3$ to 
$\gamma_2$. In both cases, each positive step or fold is labeled with a nonzero complex number, and each negative step is labeled with zero. 

Then points in the intersection $S = I^{\gamma_1} \cap S^{\gamma_2}_{w_0}$ in the affine flag variety are in bijection with the set of labeled folded paths $p$ of type $\vec{\gamma_1}$ which ends in $\gamma_2$. The dimension of $S$ is given by the number of distinct nonzero labels in a path $p$. 

An open neighborhood of the $T-$fixed point indexed by $\gamma_2$ in $S$ is generated by a set of orbits of distinct affine root subgroups, and these affine roots are defined by the hyperplanes at which a positive step or fold occurs.

\end{prop}

\begin{proof}

This immediately follows from \cite{PRS}, especially Theorem 7.1. 

\end{proof}

\begin{corollary}
\label{genMVSL2nonempty}
Let $G = SL_2(\mathbb{C})$. Let $S = I^{\gamma_1} \cap S^{\gamma_2}_{w_0}$ be an intersection of an Iwahori orbit and a $U^-$ orbit in the affine flag variety. If $\gamma_1 \geq_{U^-} e$, then $S$ is nonempty if and only if $\gamma_1 = \gamma_2$. 
If $\gamma_1 \leq_{U^-} e$, then $S$ is nonempty if and only if one of the following two conditions are satisfied: 
(1) $\gamma_1$ and $\gamma_2$ are equal; (2) $\gamma_2 \leq_B \gamma_1$ and they can be connected by an extended torus invariant $\mathbb{P}^1$. 

If $S$ is nonempty, it only has one irreducible component. 

\end{corollary}

\begin{proof}
This follows from \Cref{SL2alcovewalks}. 
\end{proof}

Next we compare affine Schubert varieties in the affine flag variety for $G = SL_2(\mathbb{C})$ with generalized MV cycles.

\begin{prop}
\label{IwagenMVcoro}

When $G = SL_2(\mathbb{C})$, the closure of each Iwahori orbit, $\overline{I^{\gamma_1}}, \gamma_1 \in W_{aff}$, in the affine flag variety is a generalized MV cycle. 
If $\gamma_1 \geq_{U^-} e$, then $\overline{I^{\gamma_1}} = \overline{I^{\gamma_1} \cap S_{w_0}^{\gamma_1}}$. If $\gamma_1 <_{U^-} e$, then $\overline{I^{\gamma_1}} = \overline{I^{\gamma_1} \cap S_{w_0}^{s_1\cdot \gamma_1}}$. 

\end{prop}

\begin{proof}

By \Cref{SL2alcovewalks}, the intersection of an Iwahori orbit and a $U^-$ orbit as specified in the proposition has the same dimension as the Iwahori orbit itself. Therefore their closures agree.

\end{proof}

Below we describe each generalized MV cycle as the closure of a GGMS stratum, and characterize their moment polytopes.

\begin{prop}
\label{genMVpolyprop}

Let $G = SL_2(\mathbb{C})$. Let $P$ denote the moment polytope of the closure of a nonempty intersection $S = I^{\gamma_1} \cap S^{\gamma_2}_{w_0}$. If $\gamma_1 >_{U^-} e$, then $\gamma_2 = \gamma_1$ and 
$\overline{S} = \overline{S_{w_0}^{\gamma_2} \cap S_e^{s_0 \cdot \gamma_1}}$. The moment polytope $P$ is the line interval with vertices $\gamma_2$ and $s_0 \cdot \gamma_1$. If $\gamma_1 \leq_{U^-} e$, then $\overline{S} = \overline{S_{w_0}^{\gamma_2} \cap S_e^{\gamma_1}}$. The moment polytope $P$ is the point $\gamma_1$ if $\gamma_2 = \gamma_1$, and is the line interval with vertices $\gamma_1$ and $\gamma_2$ if $\gamma_2 \neq \gamma_1$. 

The dimension of $S$ is equal to the root number of any vertex $v$ of $P$.  

\end{prop}

\begin{remark}
By \Cref{IwagenMVcoro}, \Cref{genMVpolyprop} gives a description of affine Schubert varieties in $Fl_{SL_2(\mathbb{C})}$ as the closures of GGMS strata, and characterizes their moment polytopes. 
\end{remark}

\begin{proof}

Let $J$ be the set of affine Weyl group elements that are smaller than $\gamma_1$ according to the usual Bruhat order. Let $\gamma'$ denote the smallest element in $J$ according to the $U^-$ semi-infinite Bruhat order. Note that $\gamma' = \gamma_1$ if $\gamma_1 \leq_{U^-} e$, and $\gamma' = s_0\gamma_1$ if $\gamma_1 >_{U^-} e$.  Let $P_2$ denote the line interval with vertices $\gamma_2$ and $\gamma'$, then $P \subseteq P_2$. We would like to show that $P = P_2$. 

By \Cref{SL2alcovewalks}, the dimension of a generalized MV cycle is given by $m$, the total number of positive steps and folds in its corresponding labeled alcove walk. Let $J'$ denote the set of $m$ distinct affine simple reflections across the hyperplanes represented by positive steps and folds in the alcove walk. Then the lattice points in $P_2$ are precisely the images of $\gamma_2$ under the affine simple reflections in $J'$. Let $J''$ denote the set of $T-$fixed points whose moment map images are these $m$ lattice points in $P_2$. Geometrically, for each $T-$fixed point $p$ in $J''$, there is an affine root subgroup whose orbit closure connects $p'$, the $T-$fixed point indexed by $\gamma_2$, and $p$. These $m$ affine root subgroup orbits containing $p'$ generate an open neighborhood of $p'$ in $S$. Therefore the $T-$fixed point indexed by $\gamma'$ belongs to this generalized MV cycle, and $P = P_2$. As a result, the dimension of $S$ is the root number of the vertex $\gamma_2$. By symmetry, the same is true for the other vertex $\gamma'$. 

As for the GGMS strata part, by \Cref{GGMSirreprop}, $\overline{S}$ is contained in the closure of a GGMS stratum $S'$ with the same moment polytope. By \Cref{localdimprop}, the dimension of any irreducible component of $S'$ is less than or equal to the root number a vertex of $P$, which is equal to the dimension of $S$. Therefore, $\overline{S}$ and $\overline{S'}$ must be equal. 

\end{proof}

The next theorem precisely describes the special fiber limits of MV cycles via generalized MV cycles and GGMS strata. It also characterizes the moment polytopes of the irreducible components for the special fiber limits. 

\begin{theorem}
\label{MVSL2thrm}

Let $G = SL_2(\mathbb{C})$. Let $S = \overline{Gr^{\lambda} \cap S_{w_0}^{\mu}} = \overline{S_{e}^{- \lambda} \cap S_{w_0}^{\mu}}$, where $\lambda \in X_*(T)^+, \mu \in X_*(T)$, be an MV cycle, and $\tilde{S}$ be its special fiber limit in the affine flag variety. If $\mu = -\lambda$, then $\tilde{S}$ is the $T-$fixed point indexed by $(-\lambda, e)$. Otherwise $\tilde{S}$ has exactly two irreducible components. Each irreducible component of $\tilde{S}$ is a generalized MV cycle and the closure of a GGMS stratum in the affine flag variety. 

More specifically, consider the case of $\mu \neq -\lambda$.  If $\lambda = \mu$, then $S$ is the closure of the $G(\mathcal{O})$ orbit $Gr^{\lambda}$. The two irreducible components of $\tilde{S}$ are the closures of two Iwahori orbits 
$\overline{I^{(\lambda, e)}}= \overline{I^{(\lambda, e)} \cap S_{w_0}^{(\lambda, e)}}$, and $\overline{I^{(-\lambda, e)}} = \overline{I^{(-\lambda, e)} \cap S_{w_0}^{(\lambda, w_0)}}$, both of which are special cases of generalized MV cycles. 
If $-\lambda < \mu < \lambda$, the two irreducible components are the generalized MV cycles $\overline{I^{(-\lambda, e)} \cap S_{w_0}^{(\mu, w_0)}}$ and 
$\overline{I^{(-\lambda + \alpha, w_0)} \cap S_{w_0}^{(\mu, e)}}$. 

In both cases, the irreducible component containing $(\mu, e)$ is equal to the closure of the GGMS stratum $S_e^{(-\lambda + \alpha, w_0)} \cap S_{w_0}^{(\mu, e)}$, and the moment polytope of this component is the line interval with vertices $(-\lambda + \alpha, w_0)$ and $(\mu,e)$. The irreducible component containing $(\mu, w_0)$ is equal to the closure of the GGMS stratum $S_e^{(-\lambda, e)} \cap S_{w_0}^{(\mu, w_0)}$, and the moment polytope of this component is the line interval with vertices $(-\lambda, e)$ and $(\mu, w_0)$. 

\end{theorem}

\begin{proof}

The case of $\mu = -\lambda$ is obvious. We focus on the case of $\mu \neq -\lambda$ in this proof. 

By \Cref{basicMVlemma}, $\tilde{S}$ is contained in a finite union of intersections of Iwahori orbits and $U^-$ orbits. Among these intersections, only two have the dimension of $S$ by \Cref{SL2alcovewalks}. If $\lambda = \mu$, then the original MV cycle is equal to the closure of a $G(\mathcal{O})$ orbit. Its special fiber limit is described in \cite{Zhu}. The description of the closures of these two Iwahori orbits as generalized MV cycles follows from \Cref{IwagenMVcoro}. If $-\lambda < \mu < \lambda$, then the two relevant generalized MV cycles of the right dimension are the closures of $I^{(-\lambda, e)} \cap S_{w_0}^{(\mu, w_0)}$ and $I^{(-\lambda + \alpha, w_0)} \cap S_{w_0}^{(\mu, e)}$ respectively. Both of these two generalized MV cycles must be an irreducible component of $\tilde{S}$ as both of them contain special fiber limits of $T-$fixed points in the original MV cycle. The descriptions in terms of GGMS strata and moment polytopes follow from \Cref{genMVpolyprop}.

\end{proof}

Next we present an explicit example of the central degeneration of an MV cycle in the affine Grassmannian for $SL_2(\mathbb{C})$. 

\begin{example}

Let $\alpha$ be the simple coroot for $G = SL_2(\mathbb{C})$. Consider the MV cycle $\overline{Gr^{2\alpha} \cap S_{w_0}^{\alpha}}$. 

This degeneration is illustrated in terms of moment polytopes below: 

\includegraphics[scale = 0.8]{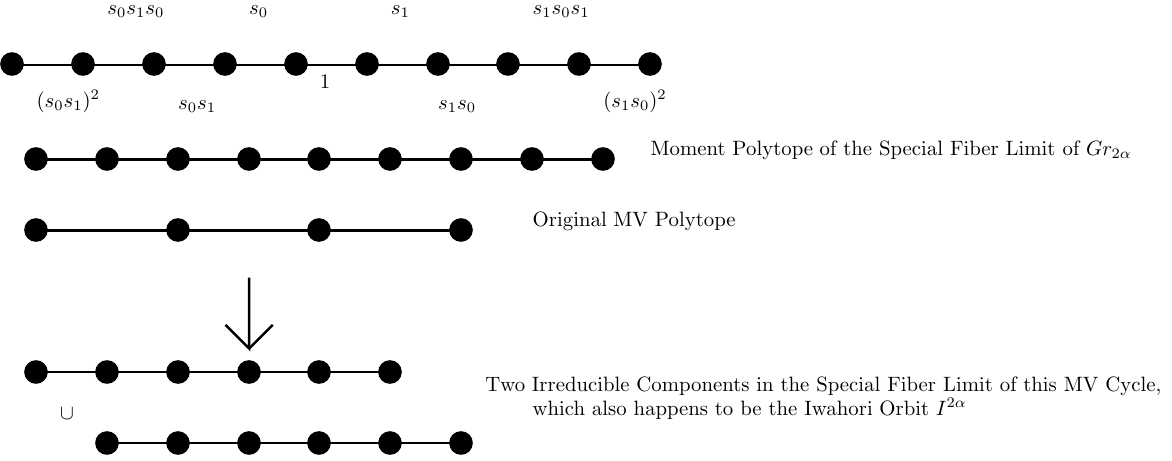}

The first line of dots is the alcove diagram for $SL_2(\mathbb{C})$. The second line gives the moment polytope of the special fiber limit of the closure of the entire $G(\mathcal{O})$ orbit $Gr^{2 \alpha}$. The moment map image of each $T-$fixed point lies in the interior of a unique alcove. The third line is the moment polytope of our MV cycle in the general fiber $Gr \times \{ e\}$. Each $T-$fixed point of our original MV polytope lies in the interior of an alcove that corresponds to a translation element in the affine Weyl group. The last two lines illustrate the moment polytopes of the two irreducible components for the special fiber limit of the original MV cycle.

\end{example}

\subsection{General $SL_n(\mathbb{C})$ case}

In this subsection we would like to shed some light on the special fiber limits of MV cycles for general $SL_n({\mathbb{C}})$ by relating it with the special fiber limits for the $SL_2(\mathbb{C})$ case. 

First we remark that unlike in the $SL_2(\mathbb{C})$ case, not every irreducible component in the special fiber limit of an MV cycle for general $G$ in type A is a generalized MV cycle for $G$. 

\begin{example}

Consider one of the MV cycles in $Gr^{\alpha + \beta} \cap S_{w_0}^{e}$ for $G = SL_3(\mathbb{C})$. It is two-dimensional and isomorphic to 
$\mathbb{P}^2$. Its special fiber limit has three irreducible components, and one of them is contained in the generalized MV cycle 
$\overline{I^{s_1s_2s_1} \cap S_{w_0}^e}$, which is also the three-dimensional $G/B$ bundle above $t^e$ in the affine Grassmannian. We denote this component $V$.  

We claim that $V$ is not a generalized MV cycle. To see this, note that $V$ has four $T-$fixed points. Among them, the longest element in $W_{aff}$ is $s_1s_2s_1$, and the biggest element according to the $U^-$ periodic Bruhat ordering is $e$. So if 
$V$ were a generalized MV cycle, it has to be the closure of an irreducible component of $I^{s_1s_2s_1} \cap S_{w_0}^e$. However, we already know that the closure of $I^{s_1s_2s_1} \cap S_{w_0}^e$ is isomorphic to $G/B$, whose dimension is higher than that of $V$. Therefore, $V$ cannot be a generalized MV cycle.

\end{example}

\begin{conj}

If an MV cycle is the only irreducible component in the closure of the intersection of a $G(\mathcal{O})$ orbit and a $U^-$ orbit in the affine Grassmannian $Gr_{SL_n(\mathbb{C})}$, then each irreducible component of the special fiber limit of this MV cycle would be a generalized MV cycle in the affine flag variety. 

In general, an irreducible component for the special fiber limit of an MV cycle for $G = SL_n(\mathbb{C})$ is isomorphic to a type A generalized MV cycle for $GL_m(\mathbb{C})$ for some $m \in \mathbb{Z}^{> 0}$.

\end{conj}

In \cite{BrGai} and \cite{Kam2}, a crystal structure was defined on the set of MV cycles and MV polytopes of a group $G$ by restricting/projecting each MV cycle to an MV cycle for a rank one Levi subgroup. For the remainder of this subsection, we will also use this Levi restriction map on the affine Grassmannian and affine flag variety to discover properties about special fiber limits of MV cycles in general $SL_n(\mathbb{C})$ case. Our results can be easily extended to all MV cycles for $GL_n(\mathbb{C})$.  

\begin{prop}
\label{restriinterprop}

Let $P^J \subseteq B_w$ be a parabolic subgroup with unipotent radical $N^J$ and Levi factor $G_J$, where $J$ is the associated subset of the set of vertices of the Dynkin diagram. 
Given $u_1, u_2 \in W_J, \mu_1, \mu_2 \in X_*(T), \gamma_1, \gamma_2 \in W_{aff}$, $r_P(S_{wu_1}^{\mu_1} \cap S_{wu_2}^{\mu_2} \subset Gr_G) = S_{u_1, J}^{\mu_1} \cap S_{u_2, J}^{\mu_2} \subset Gr_{G_J}$, and 
$r_P(S_{wu_1}^{\gamma_1} \cap S_{wu_2}^{\gamma_2} \subset Fl_G) = S_{u_1, J}^{\gamma_1} \cap S_{u_2, J}^{\gamma_2} \subset Fl_{G_J}$. 

\end{prop}

\begin{proof}

The claim follows from the arguments in the proof of the analogous statement Prop. 4.3 in \cite{Kam2}.

\end{proof}

The following result about GGMS strata immediately follows. 

\begin{corollary}
\label{GGMSrestricoro}

Let $G = SL_n(\mathbb{C})$. Given a GGMS stratum $S = \cap_{w \in W}S_w^{\eta_w}$ in the affine Grassmannian(where $\eta_w \in X_*(T)$) or in the affine flag variety (where $\eta_w \in W_{aff}$) with moment polytope $P$. Consider a pair of adjacent vertices of $P$, $\eta_{w_1}$ and $\eta_{w_2}$, such that $w_2 = w_1s_j$ for some simple reflection $s_j$. Let $J = \{j\}$, $P^J \supseteq B_{w_1}$ be the associated parabolic subgroup and $G_J$ be its Levi factor. Then the image of $S$ under $r_P$ is contained in the GGMS stratum $S_{e, J}^{\eta_{w_1}} \cap S_{s_j, J}^{\eta_{w_2}}$ in $Gr_{G_J}$ or $Fl_{G_J}$.  

\end{corollary}

\begin{prop}
\label{GGMSflagprop}

Given a nonempty GGMS stratum $S = \cap_{w \in W} S_w^{(\gamma_w)}, \gamma_w \in W_{aff}$ in the affine flag variety of $G$, let $P$ denote its moment polytope. Then every pair of adjacent vertices of $P$ in the alcove lattice must be connected by an orbit of an affine root subgroup of $G(\mathcal{K})$. 

\end{prop}

\begin{proof}

By \Cref{GGMSrestricoro}, there exists a rank one Levi group $G_J$ such that the image of $S$ under the Levi restriction map $r_P$ is contained in a GGMS stratum $S'$ of $Fl_{G_J}$. One connected component of $S'$ is isomorphic to a GGMS stratum in the affine flag variety of $SL_2(\mathbb{C})$. Since $S$ is nonempty, $S'$ is also nonempty. By \Cref{SL2GGMS}, for $S'$ to be nonempty, it is necessary that $\gamma_{w_1}$ and $\gamma_{w_2}$ can be connected by an extended torus invariant $\mathbb{P}^1$.

\end{proof}

\begin{prop}
\label{SL_2MVsubprop}

Let $G = SL_n(\mathbb{C})$. Given an MV cycle $S = \overline{\cap_{w \in W} S_w^{(\mu_w)}}, \mu_w \in X_*(T)$ with MV polytope $P$, consider any pair of adjacent vertices of $P$, $\mu_{w_1} > \mu_{w_2}$. There exists an irreducible subscheme $S'$ of $S$ containing the two $T-$fixed points indexed by $\mu_{w_1}$ and $\mu_{w_2}$ such that $S'$ is isomorphic to the MV cycle $\overline{S_e^0 \cap S_{s_1}^{(\mu_{w_1} - \mu_{w_2})}}$ in the affine Grassmannian for $SL_2(\mathbb{C})$. 

\end{prop}

\begin{proof}

By \Cref{GGMSrestricoro} and Theorem 4.6 of \cite{Kam2} or Proposition 3.1 of \cite{BrGai}, there exists a rank one Levi group $G_J$ such that the image of $S$ under the Levi restriction map $r_P$ is equal to the closure of an MV cycle $S_J$ in $Gr_{G_J}$. One connected component of $S_J$ is isomorphic to the MV cycle $S' = \overline{S_e^0 \cap S_{s_1}^{(\mu_{w_1} - \mu_{w_2})}}$ in the affine Grassmannian for $SL_2(\mathbb{C})$. Since $r_P$ is a retraction map, $S'$ is isomorphic to a subscheme of $S$ containing the two $T-$fixed points indexed by $\mu_{w_1}$ and $\mu_{w_2}$. 

\end{proof}

\begin{theorem}
\label{MVlimitpolythrm}

Let $G = SL_n(\mathbb{C})$. Given an MV cycle $S = \overline{\cap_{w \in W}S_w^{\mu_w}}$ with special fiber limit $\tilde{S}$. Let $P$ denote the moment polytope of $S$ and $\tilde{S}$ with vertices $(\mu_w, e), w \in W$. Given any irreducible component $S'$ of $\tilde{S}$, its moment polytope $P'$ is a Pseudo-Weyl polytope in $P$. The following two properties must be satisfied by $P'$: (1) every pair of adjacent vertices of $P'$ can be connected by an extended torus invariant $\mathbb{P}^1$; (2) the root number of any vertex $v$ of $P'$, $n_{P'_v}$, is bigger than or equal to the dimension of $S$.  

In $\tilde{S}$, each extremal $T-$fixed point indexed by $(\mu_w, e)$ lies in a unique irreducible component $S'_w$. The moment polytope $P'_w$ of $S'_w$ satisfies the following additional property: (3) the set of adjacent vertices of $(\mu_w, e)$ in $P'_w$, denoted by $C_1$, is in bijection with the set of adjacent vertices of $(\mu_w, e)$ in $P$, denoted by $C_2$. Given a vertex $(\eta, e) \in W_{aff}$ in $C_2$, the corresponding element $f_{\eta}$ in $C_1$ is the affine Weyl group element adjacent to $(\eta, e)$ on the edge connecting $(\mu_w, e)$ and $(\eta, e)$ in $P$. 

\end{theorem}

\begin{proof}

Given any irreducible component $S'$ of $\tilde{S}$, by \Cref{basicMVlemma} there exists a GGMS stratum $\hat{S}$ whose closure contains $S'$ and shares the same moment polytope $P'$ with $S'$. Then property (1) immediately follows from \Cref{GGMSflagprop}. 

As for property (2), note that by \Cref{localdimprop}, the root number of any vertex $v$ of $P'$ is bigger than or equal to the dimension of any irreducible component of $\overline{\hat{S}}$ that contains the $T-$fixed point corresponding to $v$, which is in turn bigger than $\dim(S') = \dim(S)$. 

Given the extremal $T-$fixed point indexed by $\mu_w \in X_*(T)$ of $S$, its open neighborhood in $S$ degenerates to an irreducible open neighborhood of $(\mu_w, e)$ in $\tilde{S}$ by \Cref{rootdegcoro}. Therefore, the $T-$fixed point indexed by $(\mu_w, e)$ lies in a unique irreducible component of $\tilde{S}$. 

For statement (3), consider an adjacent vertex $(\eta, e)$ of $(\mu_w, e)$ in $C_2$. By \Cref{SL_2MVsubprop}, there exists a subscheme $V_{\eta}$ containing the two $T-$fixed points indexed by $\eta$ and $(\mu_w, e)$ such that $V_{\eta}$ is isomorphic to an MV cycle of the affine Grassmannian for $SL_2(\mathbb{C})$. Therefore, the special fiber limit $\tilde{V_{\eta}}$ of $V_{\eta}$, which is isomorphic to that of an MV cycle for $SL_2(\mathbb{C})$, is contained in $\tilde{S}$. By \Cref{MVSL2thrm}, $f_{\eta}$ as specified in the statement of the theorem is a $T-$fixed point in $\tilde{V_{\eta}}$. By \Cref{GGMSflagprop} and \Cref{rootfixedptsprop}, $(\eta, e)$ does not lie in $P'_w$, as it is also a translation element and therefore cannot be connected with $(\mu_w, e)$ via an extended torus invariant $\mathbb{P}^1$. Therefore, $f_{\eta}$ is one adjacent vertex of $(\mu_w, e)$ in $P'_w$.

\end{proof}

In many familiar examples, the irreducible components of the special fiber limit are in one-one correspondence with the vertices of the MV polytope.

\begin{example}

Let $G = GL_n(\mathbb{C})$. The closure of any $G(\mathcal{O})$ orbit $Gr^{\lambda}$ for minuscule $\lambda$ is an ordinary Grassmannian. The explicit equations for central degeneration in this case are worked out in \cite{Go}. 

In the special case where $Gr^{\lambda} \cong \mathbb{P}^n$, all the MV cycles are $\mathbb{P}^m$ for some positive integer $m \leq n$. Given an MV cycle which is isomorphic to $\mathbb{P}^k$, its special fiber limits have $k+1$ irreducible components \cite{Go}, one for each $T-$fixed point in $\mathbb{P}^k$. 

\end{example}

In the next part, we present our main example. In particular, with some techniques from symplectic geometry, we demonstrate the existence of an irreducible component in the special fiber limit of an MV cycle for $SL_3(\mathbb{C})$ that does not contain any of the extremal $T-$fixed points.

\subsection{Main example via fibers of moment maps}

In this subsection, we focus on the case of $G = SL_3(\mathbb{C})$, where $\alpha$ and $\beta$ are the two simple coroots. We will describe the degenerations of all the MV cycles in the closure of the $G(\mathcal{O})$ orbit $\overline{Gr^{\alpha + \beta}}$. These MV cycles give a canonical basis of the adjoint representation of $PGL_3(\mathbb{C})$ (or $SL_3(\mathbb{C})$).  Each of them is the closure of an irreducible component of the intersection $Gr^{\alpha + \beta} \cap S_{w_0}^{\mu}$, where $\mu$ is a coweight that lies in the convex hull of $w \cdot (\alpha + \beta), w \in W$. 

In the diagrams below, the lattice formed by the hyperplanes is the coweight lattice of $G$, and the alcoves are labeled by affine Weyl group elements. Various convex polytopes lie in the weight lattice, and their vertices all lie in the interior of alcoves. The three long arrows represent finite roots, and the shaded cone is the set of dominant coweights up to a choice. 

First, when $\mu = - (\alpha + \beta)$, the MV cycle is the $T-$fixed point $t^{\mu}$, and it degenerates to the $T-$fixed point indexed by $(\mu, e)$ in the affine flag variety. 

Then we look at the MV cycles which are the closures of the orbits of $T$ or its subgroups. Consider a flat degeneration of a toric variety. By \cite{Stu}, the moment polytopes of the irreducible components for the special fiber gives rise to a regular subdivision of the moment polytope of the general fiber. Also, distinct irreducible components have distinct moment polytopes. 

When $\mu = - \alpha$ or $- \beta$, this MV cycle a $T \times \mathbb{C}^*$ invariant $\mathbb{P}^1$. In particular, it is a toric variety for a one-dimensional subtorus in $T$. Its MV polytope, which is a line interval, only has three lattice points, so there is only one possible regular subdivision. The two subintervals in the MV polytope are the moment polytopes of the two irreducible components in the special fiber limit. Overall, this is an example of a $\mathbb{P}^1$ degenerates to two copies of $\mathbb{P}^1$ intersecting at a point. 

When $\mu = e$, each of the two MV cycles is isomorphic to $\mathbb{P}^2$, which is a toric variety for the maximal torus $T \subset G$. The MV polytope is the big triangle below, and there is only one possible regular subdivision it, as shown below. The green, yellow and purple convex polytopes in the subdivision are the moment polytopes of the three irreducible components in the special fiber limit of the MV cycle. 

\includegraphics[scale = 0.6]{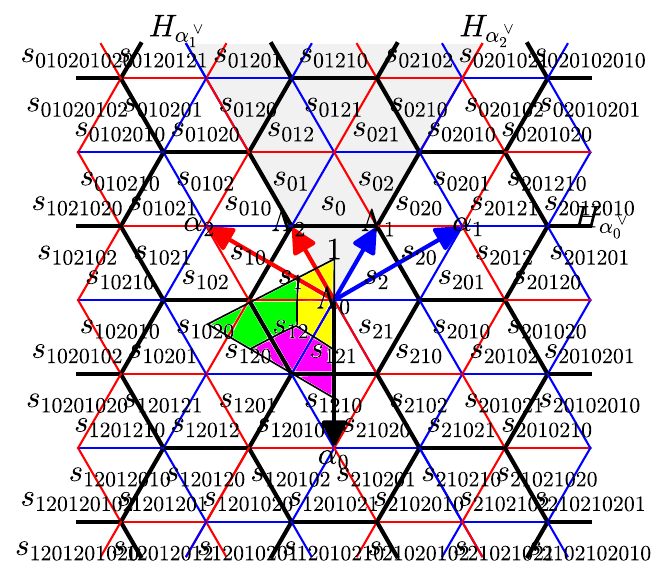} 

Note that this MV cycle is isomorphic to the minuscule $G(\mathcal{O})$ orbit $Gr^{(1, 0, 0)} \cong \mathbb{P}^2$ in the affine Grassmannian for $GL_3(\mathbb{C})$. By \cite{Go}, the special fiber limit of $Gr^{(1, 0, 0)} \subset Gr_{GL_3(\mathbb{C})}$ also has three irreducible components,  and their moment polytopes are isomorphic to the three colored moment polytopes shown above. 

Also, let's not forget about the MV cycle that is equal to the closure of the entire $G(\mathcal{O})$ orbit. The MV polytope is the big hexagon below. As we have explained earlier, by \cite{Zhu}, the special fiber limit has $|W|$ many irreducible components, each of which is the closure of a four-dimensional Iwahori orbit in the affine flag variety. Their moment polytopes are illustrated as six single-colored (red, orange, green, yellow, purple, light blue) convex polytopes below. 

\includegraphics[scale = 0.6]{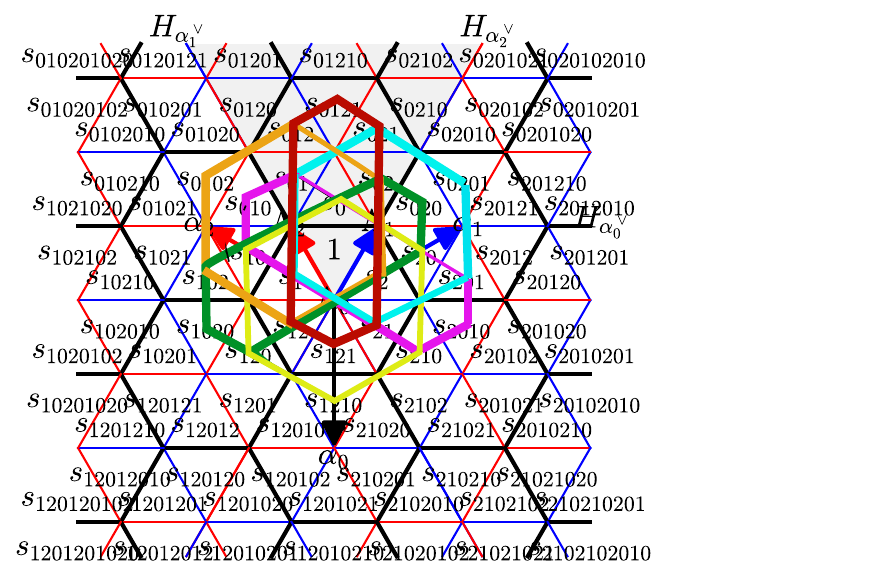}

Finally, when $\mu = \alpha$ (or $\beta$), the MV cycle is three-dimensional, and the MV polytope is the big trapezoid below. In all the cases above, the irreducible components in the special fiber limit of the given MV cycle are in bijection with certain convex polytopes containing vertices of the MV polytope. In this case, we would like to demonstrate the existence of an irreducible component in the special fiber limit whose moment polytope does not contain any vertex of the MV polytope. 

Each vertex of the MV polytope is contained in a unique irreducible component for the special fiber limit, and their moment polytopes are illustrated as single-colored convex polytopes below. The four extremal moment polytopes are denoted as $P_{green}, P_{blue}, P_{purple}$ and $P_{orange}$ according to their colors. Similarly, the corresponding irreducible components are denoted as $C_{green}, C_{blue}, C_{purple}$ and $C_{orange}$.

\includegraphics[scale = 0.6]{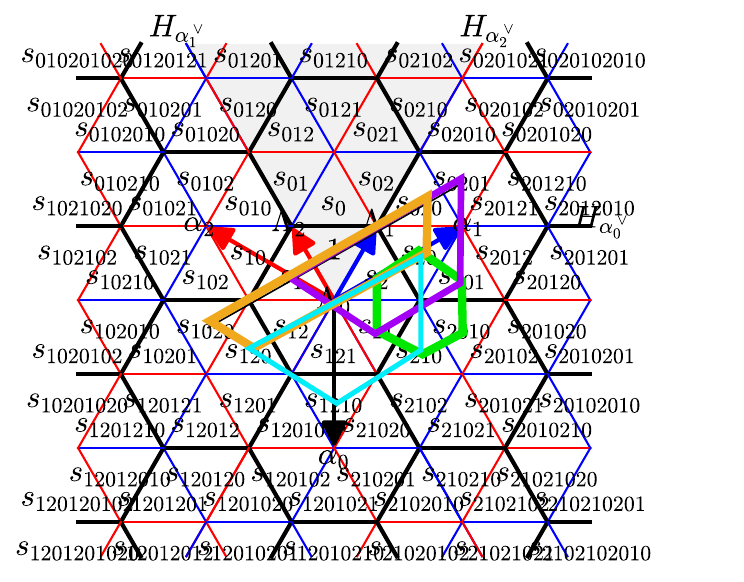}

We are going to show that there exists an irreducible component whose moment polytope does not contain any of the vertices of the original MV polytope. In fact, there are exactly five irreducible components in the special fiber limit, and we illustrate their moment polytopes as the single colored convex polytopes below. 

\includegraphics[scale = 0.8]{"Decomposition_trapezoid".pdf} 

As explained in section 3, we can define moment maps $\Phi_{T_{\mathbb{R}}}$ and $\Phi_{(T \times \mathbb{C}^*)_{\mathbb{R}}}$ for the actions of $T \subset G$ and the extended torus 
$T \times \mathbb{C}^*$ on the affine Grassmannian and affine flag variety. There is a natural projection map $proj$ between their moment map images, and the diagram below commutes.

\xymatrixrowsep{10mm} 
\xymatrixcolsep{2pc}
\xymatrix{
 & X \ar[rd]^{\Phi_{(T \times \mathbb{C}^*)_{\mathbb{R}}}}  \ar[ld]_{\Phi_{T_{\mathbb{R}}}} \\
\mathbb{R}\delta \oplus \mathfrak{t}^*_{\mathbb{R}} \ar[rr]^{proj} & & \mathfrak{t}^*_{\mathbb{R}} 
}

Let $K$ denote the maximal compact subgroup $SU_3(\mathbb{C})$ of $G$. Recall that the affine Grassmannian $Gr_G = L_{alg}G/L_{alg}^+G$ is homeomorphic to the based loop group of $K$, $\Omega_K = L_{alg}K/K$ by \cite{PS}. By Lemma 3.3 in \cite{HHJM}, when restricted to the closure of each Bruhat cell of $\Omega_K$, each fiber of the moment map $\Phi_{(T \times \mathbb{C}^*)_{\mathbb{R}}}$ is path connected. Since this MV cycle $S$ is also an affine Schubert variety in $Gr_G$, each fiber of $\Phi_{(T \times \mathbb{C}^*)_{\mathbb{R}}}$ when restricted to $S$ is path connected as well. Through an explicit calculation of the $T$ and $T \times \mathbb{C}^*$ equivariant moment polytopes of $S$, we conclude that each fiber of the natural projection $proj$ is also path connected. By the commutative diagram, each fiber of $\Phi_{T_{\mathbb{R}}}$ when restricted to $S$ is path connected. Since this flat degeneration is $T-$equivariant, for each point in the MV polytope $P$, its fiber in $S$ degenerates to its fiber in the limit $\tilde{S}$. Therefore, each fiber of $\Phi_{T_{\mathbb{R}}}$ when restricted to $\tilde{S} \subset Fl_G$ is also path connected.

Let $P_{black}$ denote the black-colored internal hexagon in the diagram above. It intersects nontrivially with all the other four single-colored extremal convex polygons $P_{green}, P_{blue}, P_{purple}$ and $P_{orange}$. Let $H_g, H_b, H_p, H_o$ be the intersections of $P_{green}, P_{blue}, P_{purple}, P_{orange}$ with $P_{black}$ respectively. Let $p_1$ and $p_2$ be an interior point in $H_b \cap H_p$ and $H_o \cap H_p$ with fibers $f_1$ and $f_2$ respectively. Suppose $f_1$ is contained in the union of $C_{blue}$ and $C_{purple}$. Since $f_1$ is path connected, $f_1 \cap C_{blue} \cap C_{purple} \neq \emptyset$. This is true for every interior point of $H_b \cap H_p$. Let $h$ denote the unique interior lattice point of $P_{black}$. Note that $h$ is not the moment map image of any $T-$fixed point as it does not lie in the interior of any alcove. On the other hand, the moment polytope of the intersection $C_{blue} \cap C_{purple}$ must contain $h$ as a vertex as we can choose $p_1$ to be arbitrarily close to $h$. This is a contradiction. Therefore, there exists at least another irreducible component $C_{black}$ such that $f_1 \cap C_{black} \neq \emptyset$. The same is true for $f_2$. As a result, the only possible new moment polytope is indeed the Pseudo-Weyl polytope $P_{black}$. This internal irreducible component is contained in the closure of a GGMS stratum, which is equal to the three-dimensional $G/B$ bundle above identity point in the affine Grassmannian, as well as the closure of the three-dimensional Schubert cell $I^{s_1s_2s_1}$. Therefore, $P_{black}$ is the moment polytope of the unique irreducible component $\overline{I^{s_1s_2s_1}}$. 

Apart from these five irreducible components, there is no other irreducible component. This is because there is no other Pseudo-Weyl polytope in $P$ that satisfies the properties specified in \Cref{MVlimitpolythrm}.  

\begin{remark}

To compute the special irreducible component above, we used the fact that this MV cycle happens to be an affine Schubert variety. For a related study of the images of affine Schubert varieties under the map 
$\Omega K \rightarrow LK \rightarrow LK/T$ from the viewpoint of affine Schubert calculus, see \cite{Lam}. 

\end{remark}

\section{Connections with Demazure modules and Affine Deligne-Lusztig varieties}

MV cycles for $G$ give a basis of the highest weight representations of $G^{\vee}$. They arise as we consider the weight functor \cite{MV} and restrict elements of $\mathcal{P}_{G(\mathcal{O})}(Gr)$ to $U^-$ orbits in $Gr$. Similarly, generalized MV cycles arise as we consider the weight functor \cite{AB} on $\mathcal{P}_I(Fl)$. Affine Deligne-Lusztig variety \cite{Rap} is a generalization of Deligne and Lusztig's classical construction \cite{DL}, and is very important for the study of Shimura varieties in number theory.  In \cite{GHKR}, the question of non-emptiness and dimension of affine Deligne-Lusztig varieties in the affine flag variety is reduced to questions related to generalized MV cycles. By \Cref{basicMVlemma}, the special fiber limits of MV cycles are contained in a union of generalized MV cycles. We expect to use the limits of central degeneration to get a better understanding of affine Deligne Lusztig varieties. When we project a generalized MV cycle in $Fl$ to $Gr$, we get an Iwahori MV cycle. Iwahori MV cycles in $Gr$ are closely related to the combinatorics of Demazure modules for the Langlands dual group, as we will see. 
 
In this section we will first prove that the intersections of Iwahori orbits and $U^-$ orbits in the affine Grassamnnian are equi-dimensional, and give an explicit dimension formula, by adopting methods developed in \cite{MV}. Combining with Theorem 11.7 in \cite{Sch}, we prove that the numbers of Iwahori MV cycles in the intersections of Iwahori orbits and $U^-$ orbits in $Gr$ are equal to the corresponding weight multiplicities in the Demazure modules for the Langlands dual group. Then we will proceed to discuss some dimension bounds for the intersections of Iwahori orbits and $U^-$ orbits in the affine flag variety. 

Let $\lambda$ denote a coweight of $G$ and $\lambda_{dom}$ denote the dominant coweight associated with $\lambda$. In the affine Grassmannian, the $G-$orbit of $t^{\lambda}$ is the partial flag variety 
$G/P_{\lambda} = G/P_{\lambda_{dom}}$, where $P_{\lambda}$ denotes the parabolic subgroup of $G$ with a Levi factor associated with the roots $\alpha$ such that $\lambda (\alpha) = 0$. Each $G(\mathcal{O})$-orbit $Gr^{\lambda}$ in the affine Grassmannian $Gr$ is a vector bundle over a partial flag variety $G/P_{\lambda}$. The vector bundle projection map is given by 

\xymatrix{
Gr^{\lambda} \cong G(\mathcal{O}) \cdot t^{\lambda} \ar[r]^{ev_0} & G/P_{\lambda_{dom}} \cong G \cdot t^{\lambda}. }
 
The fibers are isomorphic to $I_1 \cdot t^{\lambda_{dom}}$ as vector spaces, where $I_1$ is the subgroup of $I$ that is the pre-image of the identity element under the map $ev_0$. The dimension of each fiber is $2 \cdot ht(\lambda_{dom}) - \dim(G/P_{\lambda_{dom}})$.

Let $W$ denote the Weyl group of $G$. The partial flag variety 
$G/P_{\lambda}$ has a cell decomposition indexed by elements of the coset $\tilde{W} = W/W_J$, where $W_J$ is the subgroup generated by permutations of the simple roots associated with $P_{\lambda}$. Let $X_w$ denote the Schubert cell corresponding to $w \in \tilde{W}$, and $\overline{X_w}$ denote its closure. For a unique $w$ in the coset $\tilde{W}$, $\lambda = w \cdot \lambda_{dom}$. 
Then the Iwahori orbit $I^{\lambda}$ in the affine Grassmannian $Gr$ is the pre-image of the open Schubert cell $X_w$ under the map $ev_0$ above, and is a vector bundle over $X_w$. The dimension of the Iwahori orbit $I^{\lambda}$ in the affine Grassmannian is $\dim(X_{w})+ 2 \cdot \height(\lambda_{dom}) - \dim(G/P_{\lambda_{dom}})$.

Let's first calculate the intersection of Iwahori orbits with $U^-$-orbits in the affine Grassmannian using the techniques developed in \cite{MV}. 

\begin{theorem}
\label{IwaMVthrm}

Let $G$ be a connected reductive algebraic group of any type. Let $\lambda$ and $\mu$ be coweights of $G$, and let $\lambda_{dom}$ be the dominant coweight associated with $\lambda$. Let $\tilde{W} = W/W_J$ denote the quotient of the finite Weyl group associated with the partial flag variety 
$G/P_{\lambda_{dom}}$. Let $\lambda = w \cdot \lambda_{dom}$ for a unique $w \in \tilde{W}$, and $X_w$ be the Schubert cell for $w \in \tilde{W}$.

The intersection of the $U^-$ orbit $S_{w_0}^{\mu}$ with the Iwahori orbit $I^{\lambda}$ is \emph{equidimensional} and of dimension

$$\height(\lambda_{dom} + \mu) - \dim(G/P_{\lambda_{dom}}) + \dim (X_w)$$ 
when $\lambda \leq \mu \leq \lambda_{dom}$, and is empty otherwise. 

\end{theorem}

\begin{proof}

Consider the two subgroups $I$ and $U^-_{\mathcal{O}} = U^- \cap G(\mathcal{O})$. Given a $T-$fixed point $t^{\gamma}$, there is an inclusion
$U^-_{\mathcal{O}} \cdot t^{\gamma} \hookrightarrow S_{w_0}^{\gamma} \cap Gr^{\gamma} = U^- \cdot t^{\gamma} \cap Gr^{\gamma}$. 

First consider the case when $\mu = \lambda_{dom}$. 

Let $J^- = ev_0^{-1}(N^-)$, where $N^-$ is the unipotent radical of the opposite Borel $B^-$ in $G$. We have the equalities $S_{w_0}^{\lambda_{dom}} \cap Gr^{\lambda} = U^-_{\mathcal{O}} \cdot t^{\lambda_{dom}} \cap Gr^{\lambda} = J^- \cdot t^{\lambda_{dom}}$. This intersection is the pre-image of the open opposite Schubert cell $X^-_{e}$ under the map $ev_0$, and is a vector bundle over $X^-_{e}$. 

We also have the equality $S_{w_0}^{\lambda_{dom}} \cap I^{\lambda} = J^- \cdot t^{\lambda_{dom}} \cap I^{\lambda}$. 
This intersection is the pre-image of the open Richardson variety $X^-_{e} \cap X_w$ under the map $ev_0$. The open Richardson variety 
$X^-_{e} \cap X_w$ has dimension $l(w), w \in \tilde{W}$ and is dense in $X_w$. Therefore,  $S_{w_0}^{\lambda_{dom}} \cap I^{\lambda}$ is dense in the Iwahori orbit $I^{\lambda}$ and only has one irreducible component. 
The dimension of the intersection is the same as the dimension of the Iwahori orbit $I^{\lambda}$ itself, namely $2 \cdot \height(\lambda_{dom}) - \dim(G/P_{\lambda_{dom}}) + \dim(X_w)$. 

On the other hand, let's consider the case $\mu = \lambda$.

Claim: $S_{w_0}^{\lambda} \cap Gr^{\lambda} = U^-_{\mathcal{O}} \cdot t^{\lambda} \cap Gr^{\lambda}$.

Proof of Claim:

The group $U^-$ can be written as an infinite product of affine root subgroups $U_{\alpha}, \alpha$ being an affine root of the form 
$-\alpha_0 + j \delta$, where $\alpha_0$ is a positive finite root. If $j \geq 0$, $U_{\alpha}$ is a subgroup of $U^-_{\mathcal{O}}$; if $j < 0$, $U_{\alpha} \cdot t^{\lambda} \cap Gr^{\lambda}$ is equal to the $T-$fixed point $t^{\lambda}$. Therefore $S_{w_0}^{\lambda} \cap I^{\lambda} = U^-_{\mathcal{O}} \cdot t^{\lambda} \cap I^{\lambda}$.

We know that $G(\mathcal{O})$ acts on $Gr^{\lambda}$ transitively. Therefore, the tangent space at $t^{\lambda}$ in $Gr^{\lambda}$ is isomorphic to the following quotient of the loop subalgebra $L^+{\mathfrak{g}} = \oplus_{k \geq 0} \mathfrak{g} t^k$, $ L^+{\mathfrak{g}} / Z$, 
where $Z = \{ f \in L^+_{\mathfrak{g}} | f \cdot t^{\lambda} = t^{\lambda} \cdot h~\text{for~some}~h \in L^+_{\mathfrak{g}} \}.$ In fact, since each Iwahori MV cycle is invariant under the action of the extended torus, its tangent space at a torus fixed point is a direct sum of the subspaces generated by affine roots. 

There is a decomposition of the Lie algebra 
$\mathfrak{g} = \mathfrak{t} \oplus (\oplus_{\alpha_0} (\mathfrak{g}_{\alpha_0} \oplus \mathfrak{g}_{-\alpha_0}))$, 
where $\alpha_0$ ranges over all the positive roots. The tangent space at $t^{\lambda}$ in $Gr^{\lambda}$ generated by the orbits of the Iwahori subgroup $I$ is 
$(\mathfrak{t} \oplus (\oplus_{\alpha_0} (\mathfrak{g}_{\alpha_0})) \oplus_{k > 0} \mathfrak{g}t^k) / Z$.
The tangent space at $t^{\lambda}$ generated by the orbits of the subgroup $U^-_{\mathcal{O}}$ is 
$(\oplus_{\alpha_0, k \geq 0} \mathfrak{g}_{- \alpha_0} \cdot t^k)/ Z$. As a result, the Iwahori and
$U^-$ orbits generate the tangent space at $t^{\lambda}$ in the $G(\mathcal{O})$ orbit 
$Gr^{\lambda}$.

Therefore the intersection $I^{\lambda} \cap S_{w_0}^{\lambda}$ is transverse. 
By a transversality theorem in \cite{Kle}, it is equi-dimensional and each component has dimension 
$(\height(\lambda_{dom} + \lambda)) + (2 \cdot \height(\lambda_{dom}) - \dim(G/P_{\lambda_{dom}}) + \dim(X_w)) - 2 \cdot \height(\lambda_{dom}) = \height(\lambda_{dom} + \lambda) - \dim(G/P_{\lambda_{dom}}) + \dim(X_w).$

Now we know the theorem holds for the extreme cases $\mu = \lambda_{dom}$ and $\mu = \lambda$. 
Let's consider coweights $\mu$ such that $\lambda < \mu < \lambda_{dom}$. 

In the affine Grassmannian $Gr$, $\overline{S_{w_0}^{\gamma}} = \cup_{\eta \leq \gamma} S_{w_0}^{\eta}$. 
Given a projective embedding of $Gr$, for each $U^-$ orbit $S_{w_0}^{\mu}$, its boundary is given by a hyperplane section $H_{\mu}$. This means that given two coweights $\mu_1$ and $\mu_2$ such that 
$\height(\mu_2) = \height(\mu_1) - 1$, and given any irreducible component $C_2$ of $I^{\lambda} \cap S_{w_0}^{\mu_2}$, there is an irreducible component $C_1$ of $I^{\lambda} \cap S_{w_0}^{\mu_1}$ such that $C_1 \cap H_{\mu_1}$ is dense in $C_2$. Dimension of $C_2$ is bigger than or equal to $\dim(C_1) - 1$, as $C_2$ is cut out by a hyperplane.

The difference of $\dim(S_{w_0}^{\lambda_{dom}} \cap I^{\lambda})$ and $\dim(S_{w_0}^{\lambda} \cap I^{\lambda})$ is exactly $\height(\lambda_{dom} - \lambda)$. 
For $\lambda < \mu \leq \lambda_{dom}$, whenever the height of $\mu$ decreases by $1$, the dimension of the intersection $S_{w_0}^{\mu} \cap I^{\lambda}$ has to also decrease by $1$. 

As a result, we know that for $\lambda \leq \mu \leq \lambda_{dom}$, the intersections $S_{w_0}^{\mu} \cap I^{\lambda}$ are equidimensional and we have the dimension formula as stated in the theorem.

For the emptiness claim, suppose there exists $\mu < \lambda$ such that $I^{\lambda} \cap S_{w_0}^{\mu}$ is nonempty. Then the intersection lies in both $S_{w_0}^{\mu}$ and $S_{w_0}^{\lambda}$. 
However, $S_{w_0}^{\mu} \cap S_{w_0}^{\lambda} = \emptyset$. We have reached a contradiction. 

\end{proof}

By combining the equidimensionality claim in \Cref{IwaMVthrm} and Theorem 11.7 in \cite{Sch}, we prove a result about Demazure modules for the Langlands dual group. 

\begin{corollary}

The number of irreducible components in the intersection of an Iwahori orbit and a $U^-$ orbit in the affine Grassmannian $I^{\lambda} \cap S_{w_0}^{\mu}, \lambda, \mu \in X_*(T)$ is equal to the $\mu-$weight multiplicity in the Demazure module $B^{\lambda}$ for $G^{\vee}$. 

\end{corollary}

\begin{remark}

In the case of MV cycles, the intersection $Gr^{\lambda_{dom}} \cap S^{\mu}$ is nonempty if and only if $\mu$ is in the convex hull of $W \cdot \lambda$, which is also the moment polytope of $\overline{Gr^{\lambda}}$. In our case, let $\lambda = w \cdot \lambda_{dom}$ for some $w \in \tilde{W}$. Then $I^{\lambda} \cap S^{\mu}$ is nonempty if and only if $\mu$ is in the convex hull of $W_{\leq w} \cdot \lambda$. Note that this intersection could be empty even if $\mu$ lies in the moment polytope for $\overline{I^{\lambda}}$, which is bigger than the aforementioned convex hull in general. 

\end{remark}

Now we are going to derive some dimension estimates for generalized MV cycles in the affine flag variety from our understanding of Iwahori MV cycles. In general, there is a large body of literature on emptiness and dimensions of affine Deligne Lusztig varieties, as many basic questions still remain open. For example, see \cite{GHKR, GHKR1, GoHe, He, MST} for some closely related and more general results discovered using different techniques.

\begin{corollary}

Let $\gamma' = (w \cdot \lambda_{dom}, w'), \gamma'' = (\mu, w'') \in W_{aff}$, $\tilde{W}$ denote the quotient of Weyl group associated with the partial flag variety $G/P_{\lambda_{dom}}$, 
$w', w'' \in W, w \in \tilde{W}$, $\lambda_{dom}$ be a dominant coweight and $\mu$ be a coweight. The dimension of $I^{\gamma'} \cap S_{w_0}^{\gamma''}$ in the affine flag variety for $G$ is less than or equal to 

$\height(\lambda_{dom} + \mu) - \dim(G/P_{\lambda_{dom}}) + \dim (X_w) + $

$\left\{ \begin{array}{rcl}
 \dim(G/B) - l(w') & \mbox{if} & w' = w'' \in W \\ 
 l(w') - l(w'') & \mbox{if} & w' > w'' \in W 
\end{array} \right.$

If $w'' > w'$, then the intersection is empty. 

\end{corollary}

\begin{proof}

The intersection $I^{\gamma'} \cap S_{w_0}^{\gamma''}$ lies in the $G/B$ bundle above $I^{w \cdot \lambda_{dom}} \cap S_{w_0}^{\mu} \subset Gr$. 
In the $G/B$ fiber above each point in $I^{w \cdot \lambda_{dom}} \cap S_{w_0}^{\mu} \subset Gr$, a $U^-$ orbit is the same as the 
$B^-$ orbit containing $\gamma''$, and an $I$ orbit is a subset of the product of the $B^-$ orbit and the $B$ orbit containing $\gamma'$. 

When $w' = w''$, the intersection in the $G/B$ fiber above any point is a subset of the $B^-$ orbit containing the T-fixed point indexed by $w' = w''$, whose dimension is given by $\dim(G/B) - l(w')$. 
When $w' > w''$, the intersection in the $G/B$ fiber above any point is a subset of the Richardson variety $X_{w'} \cap X^-_{w''}$, whose dimension is 
$l(w') - l(w'')$. When $w' < w''$, the intersection is empty as the intersections of distinct $B^-$ orbits in $G/B$ are empty. 

\end{proof}

So far we have studied the dimensions of generalized MV cycles in the affine flag variety by focusing on the fact that the affine flag variety is a $G/B$ bundle over the affine Grassmannian. On the other hand, in the future we would like to think more about their moment polytopes in the alcove lattice following the techniques developed in \cite{Kam1}, and extract more geometric information from that. We end this paper by stating a conjecture about Iwahori MV cycles and generalized MV cycles.  

\begin{conj}
Each Iwahori MV cycle and generalized MV cycle is equal to the closure of a GGMS stratum in the affine Grassmannian and affine flag variety respectively. 
\end{conj}

\addcontentsline{toc}{section}{References}

Author's address and email:

Qiao Zhou, Perimeter Institute for Theoretical Physics, 31 Caroline Street North, Waterloo, ON, N2L 2Y5, Canada

qzhou@perimeterinstitute.ca


\begin{thebibliography}{30}

\small{


\bibitem{And}
{\sc J. Anderson}
\newblock  \it A polytope calculus for semisimple groups. 
 Duke Math. J. 116 (2003), no. 3, 567 - 588. 
 
\bibitem{AB}
{\sc S. Arkhipov, R. Bezrukavnikov}
\newblock  \it Perverse sheaves on affine flags and Langlands dual group. 
Israel J. Math. 170 (2009), 135-183. 

\bibitem{BaGau}
{\sc P. Baumann, S. Gaussent}
\newblock \it On Mirkovi$\acute{\text{c}}$ Vilonen cycles and crystals combinatorics. 
Preprint 2008, arXiv:math/0606711v3 [math.RT].

\bibitem{BD}
{\sc A. Beilinson, V. Drinfeld}
\newblock \it Quantization of Hitchin's integrable system and Hecke eigensheaves. Preprint available at
\url{http://www.math.uchicago.edu/~mitya/langlands.html}.

 \bibitem{BrGai}
{\sc A. Braverman, D. Gaitsgory}
\newblock \it Crystals via the affine Grassmannian. 
Duke Math. J. 107 (2001), no. 3, 561- 575. 

\bibitem{Bri2}
{\sc M. Brion}
\newblock  \it ``Sur l'image de l'application moment'' in Seminaire d'alg\'ebre Paul
Dubreil et Marie-Paule Malliavin (Paris, 1986)
Lecture Notes in Math. 1296, Springer, Berlin, 1987, 177 - 192.  

\bibitem{Bri}
{\sc M. Brion}
\newblock  \it Rational smoothness and fixed points of torus actions. 
 Transform. Groups 4 (1999), no. 2-3, 127-156.  
 
\bibitem{DL}
{\sc P. Deligne, G. Lusztig}
\newblock  \it Representations of reductive groups over finite fields.
Ann. of Math., 2, The Annals of Mathematics, Vol. 103, No. 1, 103 (1): 103-161. 
 
\bibitem{Gai}
{\sc D. Gaitsgory}
\newblock \it Construction of Central Elements in the Affine Hecke Algebra via Nearby Cycles.
Inventiones mathematicae, May 2001, Volume 144, Issue 2, pp 253-280.

\bibitem{Gin}
{\sc V. Ginzburg}
\newblock \it Perverse sheaves on a loop group and Langlands duality.
Preprint 1995, arXiv:alg-geom/9511007[math.AG].

\bibitem{GKM}
{\sc M. Goresky, R. Kottwitz, R. MacPherson}
\newblock \it ``On the topology of algebraic torus actions" in Algebraic Groups (Utrecht, 1986).
Lecture Notes in Math. 1271, Springer, Berlin, 1987, 73 - 90. 

\bibitem{GM}
{\sc M. Goresky, R. MacPherson}
\newblock \it Homology of affine Springer fibers in the unramified case.
Duke Math. J. 121 (2004), no. 3, 509 - 561. 

\bibitem{Go}
{\sc U. G$\ddot{\text{o}}$rtz}
\newblock \it On the flatness of models of certain Shimura varieties of PEL-type.
Math. Ann. 321 (2001), no. 3, 689 - 727. 

\bibitem{Go1}
{\sc U. G$\ddot{\text{o}}$rtz}
\newblock \it ``Affine Springer fibers and affine Deligne-Lusztig varieties'' in Affine flag manifolds and principal bundles, 1-50, 
Trends Math., Birkh\"auser/Springer Basel AG, Basel, 2010.

\bibitem{GHKR}
{\sc U. G$\ddot{\text{o}}$rtz, T. Haines, R. Kottwitz, D. Reuman}
\newblock \it Dimensions of some affine Deligne-Lusztig varieties.
 Ann. Sci. Ecole Norm. Sup. (4) 39 (2006), no. 3, 467 - 511.
 
\bibitem{GHKR1}
{\sc U. G$\ddot{\text{o}}$rtz, T. Haines, R. Kottwitz, D. Reuman}
\newblock \it Affine Deligne-Lusztig varieties in affine flag varieties.
Compos. Math. 146 (2010), no. 5, 1339-1382.

\bibitem{GoHe}
{\sc U. G$\ddot{\text{o}}$rtz, X. He}
\newblock \it Dimensions of affine Deligne-Lusztig varieties in affine flag varieties.
Doc. Math. 15 (2010), 1009 -1028. 

\bibitem{GLS}
{\sc V. Guillemin, E. Lerman, S. Sternberg}
\newblock \it Symplectic Fibrations and Multiplicity Diagrams. 
Cambridge University Press, Cambridge, 1996. 

\bibitem{Ha}
{\sc T. Haines}
\newblock \it Introduction to Shimura Varieties with Bad Reduction of Parahoric Type.
Harmonic analysis, the trace formula, and Shimura varieties, 583 - 642, Clay Math. Proc., 4, Amer. Math. Soc., Providence, RI, 2005. 

\bibitem{HaNg}
{\sc T. Haines, B.C. Ngo}
\newblock \it Nearby cycles for local models of some Shimura varieties.
Compositio Math. 133 (2002), no. 2, 117 - 150. 

\bibitem{HaRi}
{\sc T. Haines, T. Richarz}
\newblock \it The Test Function Conjecture for Parahoric Local Models.
Preprint 2018, arXiv:1801.07094 [math.AG]

\bibitem{HHJM}
{\sc M. Harada, T. Holm, L. Jeffrey, A. Mare}
\newblock \it Connectivity properties of moment maps on based loop groups.
Geom. Topol. 10 (2006), 1607-1634.

\bibitem{Har}
{\sc R. Hartshorne}
\newblock \it Algebraic Geometry. 
New York: Springer-Verlag, 1977. Print.

\bibitem{He}
{\sc X. He}
\newblock Note on affine Deligne-Lusztig varieties.
Preprint 2013, arXiv:1309.0075 [math.AG]


\bibitem{Kam2}
{\sc J. Kamnitzer}
\newblock \it The crystal structure on the set of Mirkovi$\acute{\text{c}}$-Vilonen polytopes.
Adv. Math. 215 (2007), no. 1, 66 - 93. 

\bibitem{Kam1}
{\sc J. Kamnitzer}
\newblock \it Mirkovi$\acute{\text{c}}$-Vilonen Cycles and Polytopes. 
Ann. of Math. (2) 171 (2010), no. 1, 245 - 294. 

\bibitem{Kle}
{\sc SL. Kleiman}
\newblock \it The Transversality of a General Translate.
Compositio Math. 28 (1974), 287 - 297.


\bibitem{Lam}
{\sc T. Lam}
\newblock \it  Schubert Polynomials for the Affine Grassmannian. 
J. Amer. Math. Soc. 21 (2008), no. 1, 259 - 281.


\bibitem{LP1}
{\sc T. Lam, A. Postnikov}
\newblock \it Alcoved Polytopes I. 
Disc. and Comp. Geom., 38 (2007), 453-478


\bibitem{Lu}
{\sc G. Lusztig}
\newblock \it Hecke algebras and Jantzen's generic decomposition patterns. 
Adv. in Math. 37 (1980), no. 2, 121-164.

\bibitem{Lu2}
{\sc G. Lusztig}
\newblock \it Singularities, character formulas, and a q-analog of weight multiplicities. 
Analysis and topology on singular spaces, II, III (Luminy, 1981), 208–229, Ast$\acute{\text{e}}$risque, 101-102, Soc. Math. France, Paris, 1983. 
 

\bibitem{Mag}
{\sc P. Magyar}
\newblock \it Affine Shubert Varieties and Circular Complexes. 
Preprint 2002, arXiv:math/0210151 [math.AG]. 

\bibitem{MST}
{\sc E. Mili\'cevi\'c, P. Schwer, A. Thomas}
\newblock \it Dimensions of affine Deligne-Lusztig varieties: a new approach via labeled folded alcove walks and root operators. 
Preprint 2015, arXiv:1504.07076 [math.AG]

\bibitem{MV}
{\sc I. Mirkovi$\acute{\text{c}}$, K. Vilonen}
\newblock \it Geometric Langlands duality and representations of algebraic groups over commutative rings.
Ann. of Math. (2) 166 (2007), no. 1, 95 - 143. 


\bibitem{PRS}
{\sc J. Parkinson, A. Ram, C. Schwer}
\newblock \it Combinatorics in Affine Flag Varieties. 
J. Algebra 321 (2009), no. 11, 3469 - 3493. 

\bibitem{PS}
{\sc A. Pressley,  G. Segal}
\newblock \it Loop groups. Oxford Mathematical Monographs. 
Oxford Science Publications. The Clarendon Press, Oxford University Press, New York, 1986. 

\bibitem{Rap}
{\sc M. Rapoport}
\newblock \it A positivity property of the Satake isomorphism. 
Manuscripta Math., 101(2):153-166, 2000. 


\bibitem{Sch}
{\sc C. Schwer}
\newblock \it Galleries and q-analogs in combinatorial representation theory. 
PhD thesis (2006), Universit$\ddot{\text{a}}$t zu K$\ddot{\text{o}}$ln.

\bibitem{Stu}
{\sc B. Sturmfels}
\newblock \it Grobner Bases of Toric Varieties. 
Tohoku Math. J. (2) 43 (1991), no. 2, 249 - 261. 


\bibitem{Yun}
{\sc Z. Yun}
\newblock \it Goresky-Macpherson Calculus for the Affine Flag Varieties. 
Canad. J. Math. 62 (2010), no. 2, 473 - 480. 

\bibitem{Zhu}
{\sc X. Zhu}
\newblock \it On the Coherence Conjecture of Pappas and Rapoport.
Ann. of Math. (2) 180 (2014), no. 1, 1- 85. 

\bibitem{Zhu2}
{\sc X. Zhu}
\newblock \it An introduction to affine Grassmannians and the geometric Satake equivalence.
Geometry of moduli spaces and representation theory, 59?154, 
IAS/Park City Math. Ser., 24, Amer. Math. Soc., Providence, RI, 2017. }

\end{thebibliography}
\end{document}